\newtheorem{theorem}[equation]{Theorem}
\newtheorem{proposition}[equation]{Proposition}
\newtheorem{conjecture}[equation]{Conjecture}
\newtheorem{definition-lemma}[equation]{Definition-Lemma}
\theoremstyle{definition}
\newtheorem{definition}[equation]{Definition}
\newtheorem{example}[equation]{Example}
\theoremstyle{remark}
\newtheorem{remark}[equation]{Remark}
\numberwithin{equation}{section}
\numberwithin{figure}{section}
\newcommand {\Aut} {\operatorname{\text{Aut}}}
\newcommand {\Curv} {\operatorname{\text{\ovA{Curv}}}}
\newcommand {\cont} {\operatorname{\text{cont}}}
\newcommand {\Eff}  {\operatorname{Eff}}
\newcommand {\Fix}  {\operatorname{Fix}}
\newcommand {\Gal} {\operatorname{\text{Gal}}}
\newcommand {\Mov} {\operatorname{Mov}}
\newcommand {\Move} {\operatorname{\Mov^{\; e}}}
\newcommand {\Nef} {\operatorname{Nef}}
\newcommand {\Nefe} {\operatorname{\Nef^{\; e}}}
\newcommand {\Pic}  {\operatorname{Pic}}
\newcommand {\PsAut}  {\operatorname{PsAut}}
\newcommand\widebar[1]{\mathop{\overline{#1}}}
\newcommand*{\ov}[1]{%
  $\m@th\overline{\mbox{#1}}$%
}
\newcommand*{\ovA}[1]{%
  $\m@th\overline{\mbox{#1}\raisebox{3mm}{}}$%
}
\newcommand*{\ovB}[1]{%
  $\m@th\overline{\mbox{#1\rule{0pt}{3mm}}}$%
}
\newcommand*{\ovC}[1]{%
  $\m@th\overline{\mbox{#1\strut}}$%
}
\newcommand*{\ovD}[1]{%
  $\m@th\overline{\mbox{#1\vphantom{\"A}}}$%
}
\newcommand*{\ovE}[1]{%
  $\m@th\overline{\raisebox{0pt}[1.2\height]{#1}}$%
}
\newcommand*{\ovF}[1]{%
  $\m@th\overline{\raisebox{0pt}[\dimexpr\height+1mm\relax]{#1}}$%
}
\newcommand*{\ovG}[1]{%
  $\m@th\overline{\raisebox{0pt}[\dimexpr\height+1mm\relax]{#1\vphantom{A}}}$%
}
\begin{document}

\title{On the cone conjecture for log Calabi-Yau mirrors of Fano 3-folds}
\author{Jennifer Li}
\maketitle

\begin{abstract}
Let $Y$ be a smooth projective $3$-fold admitting a K3 fibration $f : Y \rightarrow \mathbb{P}^1$ with $-K_Y = f^*\mathcal{O}(1)$. We show that the pseudoautomorphism group of $Y$ acts with finitely many orbits on the codimension one faces of the movable cone if  $H^3(Y,\mathbb{C})=0$, confirming a special case of the Kawamata--Morrison--Totaro cone conjecture. In \cite{CCGK16} \cite{P18}, and \cite{CP18}, the authors construct log Calabi-Yau 3-folds with K3 fibrations satisfying the hypotheses of our theorem as the mirrors of Fano 3-folds. 
\end{abstract}

%
%

\section{Introduction}
\label{sec:Introduction}

Morrison's cone conjecture is stated as follows:
\begin{conjecture}(The Morrison cone conjecture)
Let $X$ be a Calabi-Yau 3-fold. Then,
\begin{enumerate}
\item The automorphism group of $X$ acts on the nef cone of $X$ with a rational polyhedral fundamental domain; and
\item The pseudoautomorphism group of $X$ (see Definition \ref{def:pseudoautomorphism} below) acts on the movable cone of $X$ with a rational polyhedral fundamental domain.
\end{enumerate}
\end{conjecture}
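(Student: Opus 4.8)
The plan is to recast both assertions as statements about a discrete group acting on a convex cone in $N^1(X)_{\mathbb R}$, and then to apply a cone criterion of Looijenga type: a subgroup $\Gamma \subset \mathrm{GL}(N^1(X)_{\mathbb Z})$ acting on a nondegenerate convex cone $C$ admits a rational polyhedral fundamental domain provided $\Gamma$ is arithmetic (equivalently, of finite index in the integral automorphisms preserving $C$) and the $\Gamma$-translates of some rational polyhedral subcone cover the rational hull $C^+ := \Conv\big(\overline C \cap N^1(X)_{\mathbb Q}\big)$. Because $K_X \sim 0$, both $\Aut(X)$ and $\PsAut(X)$ preserve the integral lattice $N^1(X)_{\mathbb Z}$, the cubic intersection form, and the linear form $c_2(X)\cdot(-)$; this rigid integral structure is what makes the problem genuinely lattice-theoretic and the criterion the natural engine to drive it.

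For part (1), I would work inside $\Nef(X)$. The Cone Theorem shows that each codimension-one face of $\Nef(X)$ meeting the interior of $\overline{\Eff}(X)$ is rational and is cut out by an extremal contraction, with such faces accumulating only towards the irrational part of $\partial\overline{\Eff}(X)$. The plan is then to feed $\Gamma_{\mathrm{nef}} := \mathrm{image}\big(\Aut(X)\to\mathrm{GL}(N^1(X)_{\mathbb Z})\big)$ into the cone criterion, reducing rational-polyhedrality of a fundamental domain to (a) arithmeticity of $\Gamma_{\mathrm{nef}}$ and (b) a finite covering of the rational interior nef rays by $\Gamma_{\mathrm{nef}}$-translates of a single rational polyhedral cone.

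For part (2), I would pass to the Mori chamber decomposition: $\Mov(X)$ is the union, over the marked minimal models $g_i : X \dashrightarrow X_i$ (the small $\mathbb Q$-factorial modifications of $X$), of the cones $g_i^*\Nef(X_i)$, and $\PsAut(X)$ permutes these chambers. Three-dimensional MMP—in particular finiteness of minimal models and Kawamata's description of flops—supplies finitely many chambers up to abstract isomorphism of the $X_i$; the real content is to promote this to finitely many $\PsAut(X)$-orbits of chambers, and then to glue the fundamental domains furnished by part (1) on each chamber into a single rational polyhedral fundamental domain for $\Mov(X)$.

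The hard part will be the arithmeticity inputs (a): showing that $\Gamma_{\mathrm{nef}}$ and the image of $\PsAut(X)$ have finite index in the full groups of integral linear automorphisms preserving $\Nef(X)$ and $\Mov(X)$ respectively. In dimension two this is exactly what the Global Torelli theorem supplies, identifying the automorphism group with an arithmetic subgroup of an orthogonal group through the period map; in dimension three no such Torelli theorem is available, the relevant integral structure is a cubic rather than a quadratic form, and pinning down the image of $\PsAut(X)$ inside the monodromy group $\Mon$ acting on $H^*(X)$ is precisely the gap that has kept the conjecture open. A K3-fibration $f : Y \to \mathbb P^1$ offers a foothold on this obstacle, since fiberwise automorphisms act on the N\'eron--Severi lattice of the generic fiber through a genuinely arithmetic orthogonal group; transporting Sterk's surface-level argument up to the total space along $f$ is the mechanism by which one can hope to establish (a) in the fibered setting, and is the route the present paper takes to resolve its special case.
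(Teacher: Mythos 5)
What you have written is a strategy outline, not a proof, and the decisive step is missing. The statement in question is stated in the paper as a \emph{conjecture}: the paper offers no proof of it (it remains open), and only establishes a much weaker special case --- finiteness of $\PsAut(Y)$-orbits on the codimension~1 faces of $\Move(Y)$ for K3-fibered 3-folds with $-K_Y = f^*\mathcal{O}(1)$ and $H^3(Y,\mathbb{C})=0$. Your reduction to a Looijenga-type criterion is the standard template, but it leaves both of its hypotheses unverified: (a) arithmeticity of the images of $\Aut(X)$ and $\PsAut(X)$ in $\mathrm{GL}(N^1(X)_{\mathbb{Z}})$, which you yourself concede is ``precisely the gap that has kept the conjecture open'' (in dimension 3 there is no Torelli theorem, the integral structure is a cubic form, and no known argument bounds the index of these images in the integral symmetries of $\Nef(X)$ or $\Mov(X)$); and (b) the finite covering of the rational hull by translates of a rational polyhedral subcone, which is not supplied by the Cone Theorem alone --- extremal rational faces can accumulate, and controlling this is equivalent to the orbit-finiteness one is trying to prove. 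Your part (2) has an additional circularity: finiteness of marked minimal models up to \emph{abstract} isomorphism does not yield finitely many $\PsAut(X)$-orbits of Mori chambers; promoting the former to the latter is essentially the content of the conjecture, and even granting it, gluing chamberwise fundamental domains along walls into a single rational polyhedral fundamental domain for $\Move(X)$ requires a real argument (this is where Totaro's two-dimensional proof does substantial work).

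For comparison, the paper's actual contribution proceeds along a different, more geometric route and proves a weaker statement: it uses the BCHM decomposition $\Move(Y)=\bigcup_{Y\dashrightarrow Z}\Nefe(Z)$ over flops, applies Mori's classification to eliminate all $K$-negative extremal contraction types except the blowup of a smooth curve (Type (1)) and conic bundles (Type (6)), and then transfers each type to data on the generic fiber $Y_\eta$, a K3 surface over $\mathbb{C}(\mathbb{P}^1)$: Type (1) faces inject into $(-2)$-curves on $Y_\eta$ and Type (6) faces give nef line bundles $L$ with $L^2\le 8$, so the Sterk--Kawamata finiteness theorems for K3 surfaces over fields of characteristic zero apply, with rays on $K^\perp$ handled by Kawamata's relative cone theorem. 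Your closing remark --- transporting Sterk's surface-level arithmeticity up the fibration --- correctly identifies the paper's mechanism, but the paper deploys it only to get finitely many orbits on faces, never a rational polyhedral fundamental domain, and never for a general Calabi--Yau 3-fold; so as a proof of the stated conjecture, your proposal has a genuine and acknowledged gap at exactly the point where all known approaches stop.
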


In particular, Morrison's conjecture implies the following:
\begin{enumerate}
\item The automorphism group of $X$ acts on the faces of the nef cone of $X$ with finitely many orbits; and
\item The pseudoautomorphism group of $X$ acts on the faces of the movable cone of $X$ with finitely many orbits.
\end{enumerate}

The Morrison cone conjecture was generalized by Totaro to klt pairs (see \cite{T10}). In this paper, we prove a version of the cone conjecture for certain types of log Calabi-Yau 3-folds, namely, for smooth projective 3-folds $Y$ that admit a {\it K}3 fibration $f: Y \rightarrow \mathbb{P}^{1}$ such that $-K_{Y} = f^{\ast} \mathcal{O}(1)$.

\begin{remark}
If $f : Y \rightarrow \mathbb{P}^1$ is a K3 fibration such that $K_Y$ is relatively nef, $-K_Y$ is effective and nonzero, $f$ is not isotrivial, and $f$ has no multiple fibers, then $-K_Y=f^*\mathcal{O}(1)$. This follows from positivity of the pushforward of the relative canonical bundle, see e.g. \cite{F78}.
\end{remark}

\begin{remark}
If $D$ is any smooth (or reduced normal crossing) fiber of $f$, then $(Y, D)$ is log Calabi-Yau.
\end{remark}

\begin{definition}
The {\em cone of curves} $\Curv(Y)$ is defined as:
\begin{center}
$\Curv(Y) = \widebar{\{ \sum a_{i} [C_{i}] \; \vert \; a_{i} \in \mathbb{R}_{\geq 0} \text{ and } C_{i} \subset Y \text{ is a curve } \}}$.
\end{center}
\end{definition}

\begin{definition}
The {\em nef cone} $\Nef(Y)$ is the dual cone of the cone of curves. The {\em nef effective cone} is defined as $\Nefe(Y) := \Nef(Y) \cap \Eff(Y)$.
\end{definition}

\begin{definition}
The {\em movable cone} is defined as:
\begin{center}
$\Mov(Y) := \widebar{\langle B \; \vert \; B \text{ is a divisor with } \vert B \vert \text{ having no fixed part} \rangle}_{\mathbb{R} \geq 0}$.
\end{center}
The {\em movable effective cone} is defined as $\Move(Y) := \Mov(Y) \cap \Eff(Y)$.
\end{definition}

\begin{definition}
\label{def:pseudoautomorphism}
A pseudoautomorphism of a variety $Y$ is a birational map $\theta: Y \dashrightarrow Y$ that is an isomorphism outside of codimension 2 subsets of the domain and codomain.
\end{definition}

\begin{definition}
A {\it K}3 fibration is a morphism $f: Y \rightarrow B$ such that the general fiber $F \subset Y$ of $f$ is a {\it K}3 surface.
\end{definition}

Here we briefly lay out some key results used in this paper. First of all, we note that our setup is a special case of the Kawamata-Morrison-Totaro cone conjecture (see Theorem \ref{thm:KMTconeConjecture}). This allows us to conclude that $\Mov(Y)$ is the union of the nef cones of all flops of $Y$ (see Theorem \ref{thm:BCHMMovableCone}). Two important results that we will use next are Mori's cone theorem (\cite{KM98}, Theorem 1.24) and Mori's classification of extremal rays of the cone of curves for smooth 3-folds (\cite{M82}). Given a smooth projective variety $Y$, Mori's cone theorem describes the cone of curves of $Y$ in the region $K_{Y} < 0$. Mori gives a classification of the contractions $\mathrm{cont}_{R}: Y \rightarrow X$ associated to extremal rays $R \subset \Curv(Y)$ in the region $K_{Y} < 0$ when $Y$ is a smooth projective 3-fold over $\mathbb{C}$. There are a total of eight possibilities (see \cite{M82} or \cite{KM98}).

Now suppose that $Y$ is a smooth projective 3-fold and $f: Y \rightarrow \mathbb{P}^{1}$ is a {\it K}3 fibration such that $-K_{Y} = f^{\ast} \mathcal{O}(1)$. In this setting, we show (Proposition \ref{prop:eliminate6cases}) that for any $Z$ obtained by a sequence of flops of $Y$, six of the eight possible types of extremal rays of $\Curv(Z)$ in the region $K_{Z} < 0$ can be eliminated, leaving two remaining types:
\begin{enumerate}
\item Type (1): the blowup of a smooth curve $\Gamma$; or
\item Type (6): a conic bundle.
\end{enumerate}

We prove the following main results.

\begin{theorem}
\label{thm:mainTheorem} 
Let $Y$ be a smooth projective 3-fold and $f: Y \rightarrow \mathbb{P}^{1}$ a {\it K}3 fibration such that $-K_{Y} = f^{\ast} \mathcal{O}(1)$. Then the pseudoautomorphism group $\PsAut(Y)$ of $Y$ acts on
\begin{enumerate}
\item the Type (6) faces of $\Move(Y)$ with finitely many orbits;
\item the Type (1) faces of $\Move(Y)$ with finitely many orbits if $H^{3}(Y, \mathbb{C}) = 0$.
\end{enumerate}
\end{theorem}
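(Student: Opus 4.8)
\section*{Proof proposal}

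The plan is to reduce the statement to a finiteness question on the general K3 fibre and then feed in Sterk's theorem for K3 surfaces. By Theorem \ref{thm:BCHMMovableCone} the effective movable cone is tiled by the nef cones $\Nefe(Z)$ of the flops $Z$ of $Y$, so every codimension one face of $\Move(Y)$ lies on the boundary $\partial\Move(Y)$ and is supported, from an adjacent chamber $\Nefe(Z)$, by a wall dual to an extremal ray $R\subset\Curv(Z)$ whose contraction $\cont_R$ is divisorial or of fibre type (the small, flopping walls being interior to $\Move(Y)$). Since the fibres of $f$ are $K_Z$-trivial, such a boundary ray is $K_Z$-negative, so Mori's classification applies, and by Proposition \ref{prop:eliminate6cases} the contraction is of Type (1) (blowdown of a prime divisor $E$ to a smooth curve $\Gamma$) or Type (6) (a conic bundle $\pi\colon Z\to S$). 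It therefore suffices to bound, for each type separately, the number of $\PsAut(Y)$-orbits of faces.

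The transfer to the fibre rests on two points. First, every $\theta\in\PsAut(Y)$ preserves $K_Z$, hence the numerical fibre class $[F]=-K_Z=f^*\mathcal{O}(1)$, so $\theta$ preserves the K3 fibration up to an automorphism of the base and induces a birational self-map of the generic fibre; as the generic fibre is a K3 surface, this is an automorphism, which by parallel transport gives an isometry of $N^1(F)$ for a general fibre $F$, compatibly with the restriction map $r\colon N^1(Y)\to N^1(F)$. Write $\Gamma=\rho(\PsAut(Y))\subset O(N^1(F))$ for the resulting image. Second, $\ker r$ is the vertical subspace, spanned by $[F]$ and the finitely many components of reducible fibres, on which $\PsAut(Y)$ acts through a finite group; so a boundary face is determined, up to finite ambiguity, by its image under $r$, and it is enough to bound $\Gamma$-orbits of the image data.

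For Type (1), the ruling fibres $\ell$ of $E\to\Gamma$ satisfy $[F]\cdot\ell=-K_Z\cdot\ell=1$, so each $\ell$ is a section of $f$ and the slice $E\cap F$ is an irreducible curve on $F$ isomorphic to $\Gamma$; by adjunction its class $e=r([E])$ has $e^2=2p_a(\Gamma)-2$. Here the hypothesis enters: via the Leray spectral sequence for $f$ together with the Gysin contribution of the divisor $E$ (whose restriction $E\to\mathbb{P}^1$ has fibres $E\cap F\cong\Gamma$), a positive genus would force $H^1(\Gamma)$ into $H^3(Y,\mathbb{C})$, so $H^3(Y,\mathbb{C})=0$ forces $p_a(\Gamma)=0$, hence $e^2=-2$ with $e$ an effective irreducible $(-2)$-class, i.e.\ a $(-2)$-curve on $F$. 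Since $\Aut(F)$ acts on $\Nefe(F)$ with a rational polyhedral fundamental domain (Sterk), there are finitely many orbits of $(-2)$-curves, and pulling back through $r$ and the finite vertical ambiguity yields finitely many orbits of Type (1) faces. For Type (6) the conic fibres satisfy $[F]\cdot C=2$, so $\pi|_F$ exhibits $F$ as a double cover of the base surface $S$; equivalently $F$ carries a non-symplectic involution $\iota$ with $F/\iota$ birational to $S$, and the face is governed by the $\iota$-invariant nef classes. A K3 surface admits only finitely many involutions up to $\Aut(F)$, so the same Sterk-type finiteness bounds the orbits; crucially no genus obstruction occurs, and hence no hypothesis on $H^3(Y,\mathbb{C})$ is needed.

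The main obstacle is the group-theoretic transfer in the second step. To convert finitely many orbits on $F$ into finitely many orbits on $Y$ one must control $\Gamma=\rho(\PsAut(Y))$ and show it is large enough --- commensurable with the subgroup of $O(N^1(F))$ generated by $\Aut(F)$ and the monodromy of the fibration --- for the finiteness downstairs to descend to the classes that actually arise. Establishing this commensurability is essentially a global Torelli plus monodromy argument identifying which automorphisms of the generic fibre propagate to pseudo-automorphisms of the total space, and this is where the real work lies; by comparison the elimination of the six contraction types and the Hodge-theoretic implication $H^3(Y,\mathbb{C})=0\Rightarrow p_a(\Gamma)=0$ are comparatively routine given the cited results.
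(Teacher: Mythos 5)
Your skeleton matches the paper's (BCHM decomposition, Mori classification via Proposition \ref{prop:eliminate6cases}, reduction of Type (1) to $(-2)$-curves on the K3 fibre using $H^3(Y,\mathbb{C})=0$, reduction of Type (6) to a degree-2 cover), but the step you explicitly defer --- ``identifying which automorphisms of the generic fibre propagate to pseudo-automorphisms of the total space \dots is where the real work lies'' --- is precisely the load-bearing step, so as written the proposal is incomplete. Moreover, you set it up in the harder direction: you map $\PsAut(Y)$ into $O(N^1(F))$ for a \emph{general complex} fibre $F$ and then ask that the image be commensurable with the group generated by $\Aut(F)$ and the monodromy, which drags in parallel transport, monodromy, and Torelli. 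The paper avoids all of this by working with the \emph{generic} fibre $Y_\eta$ over the function field $\mathbb{C}(\mathbb{P}^1)$ and proving the inclusion the other way around: $\Aut(Y_\eta)\subset\PsAut(Y)$ (Proposition \ref{prop:AutSubsetPsAut}), which is elementary --- $Y\to\mathbb{P}^1$ is a relative minimal model, and a birational self-map over the base between relative minimal models is an isomorphism in codimension one. This direction is exactly what the finiteness argument needs: Sterk's theorem in the form valid over non-algebraically-closed fields of characteristic zero (Theorem \ref{thm:SterkKawamata}) gives finitely many $\Aut(Y_\eta)$-orbits of $(-2)$-curves on $Y_\eta$, and since $\Aut(Y_\eta)$ is a \emph{subgroup} of $\PsAut(Y)$ acting compatibly with the injection $E\mapsto E_\eta$ from Type (1) exceptional divisors to $(-2)$-curves, the larger group $\PsAut(Y)$ automatically has finitely many orbits. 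No commensurability statement is needed, and no monodromy enters.

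There is a second gap in your Type (6) argument: the assertion that ``a K3 surface admits only finitely many involutions up to $\Aut(F)$'' is neither proved nor among the results you may cite, and by itself it would still not finish the argument, since you would additionally need that conjugating involutions by an automorphism of the fibre moves the corresponding faces by an element of $\PsAut(Y)$ --- again the deferred transfer step. The paper's route is different and concrete: show the restriction $h=g|_F\colon F\to S$ of the conic bundle to the generic fibre is \emph{finite} of degree 2 (no contracted curves, since contracted curves would be $K$-negative but $K$-trivial); split according to $\Fix(i)$ empty (then $S$ is Enriques, handled completely and explicitly in Example \ref{ex:Enriques}) or a union of curves (then $S$ is rational); in the rational case run MMP on $S$, rule out $\mathbb{F}_n$ with $n>4$ using that the branch divisor lies in $|-2K_{\bar S}|$ and a K3 double cover forbids a multiple fixed component, and thereby produce a nef line bundle $L$ on $F$ with $L^2\le 8$ that recovers the double cover $F\to S$. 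Then Sterk's finiteness for nef line bundles of bounded square (Theorem \ref{thm:SterkLineBundle}), again applied to the generic fibre and combined with $\Aut(Y_\eta)\subset\PsAut(Y)$, yields finitely many orbits of Type (6) faces. In short: your reading of where the difficulty sits is inverted --- the fibre-to-total-space transfer is easy once phrased via the generic fibre over $\mathbb{C}(\mathbb{P}^1)$, whereas the Type (6) case requires genuine geometric work (finiteness of $h$, the Enriques/rational dichotomy, and the bound $n\le 4$) that your proposal compresses into an unsupported claim.
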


\begin{remark}
If $H^{3}(Y, \mathbb{C}) = 0$, then any contraction of Type (1) has genus $g=0$.
\end{remark}

By a result of Kawamata, we can also take care of all curves on $K^{\bot}$:

\begin{theorem} (Kawamata \cite{K97})
\label{thm:Kawamata}
$\PsAut(Y)$ acts on the faces of $\Move(Y)$ containing $-K_{Y}$ with finitely many orbits.
\end{theorem}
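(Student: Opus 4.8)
The plan is to recognize $f\colon Y\to\mathbb{P}^1$ as a Calabi--Yau fiber space in the sense of Kawamata \cite{K97}, reduce the faces of $\Move(Y)$ through $-K_Y$ to the relative structure over $\mathbb{P}^1$, and then deduce finiteness of orbits from Kawamata's relative cone theorem. \textbf{Step 1 (the fibration is Calabi--Yau).} Since $-K_Y=f^{*}\mathcal{O}(1)$, the class $K_Y=f^{*}\mathcal{O}(-1)$ is pulled back from the base and is therefore numerically trivial on the fibers of $f$, i.e.\ $K_Y\equiv_f 0$. As $Y$ is smooth (hence terminal and $\mathbb{Q}$-factorial), this exhibits $f$ as a relatively minimal Calabi--Yau fiber space over $\mathbb{P}^1$, so the hypotheses of \cite{K97} are satisfied.

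\textbf{Step 2 (faces through $-K_Y$ are relative).} A curve class $\gamma\in N_1(Y)$ satisfies $\gamma\cdot(-K_Y)=\gamma\cdot f^{*}\mathcal{O}(1)=0$ precisely when $\gamma$ is contracted by $f$, i.e.\ $\gamma\in K_Y^{\perp}$ is a fiber class. Hence the faces of $\Move(Y)$ containing $-K_Y$ are exactly those cut out by curve classes in $K_Y^{\perp}$, which matches the phrase ``all curves on $K^{\perp}$''. Passing to the relative N\'eron--Severi space $N^1(Y/\mathbb{P}^1)=N^1(Y)/\mathbb{R}\cdot(-K_Y)$, I would show that quotienting by the ray $\mathbb{R}_{\ge 0}(-K_Y)$ identifies these faces with the faces of the relative movable cone $\Mov(Y/\mathbb{P}^1)$. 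Concretely, a contraction $g\colon Y'\to X'$ of a flop $Y'$ of $Y$ determines a face of $\Move(Y)$ containing $-K_{Y'}=f'^{*}\mathcal{O}(1)$ exactly when $f'$ factors as $Y'\xrightarrow{g}X'\to\mathbb{P}^1$, i.e.\ when $g$ is a contraction over $\mathbb{P}^1$; these relative contractions are precisely what the faces of $\Mov(Y/\mathbb{P}^1)$ parametrize, and the finite list of relative minimal models (also from \cite{K97}) ensures all such faces are accounted for.

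\textbf{Step 3 (apply Kawamata and pass to the absolute group).} By \cite{K97}, the relative movable cone $\Mov(Y/\mathbb{P}^1)$ admits a rational polyhedral fundamental domain for the action of $\PsAut(Y/\mathbb{P}^1)$, the birational self-maps over $\mathbb{P}^1$, which are isomorphisms in codimension one because $K_Y\equiv_f 0$. In particular $\PsAut(Y/\mathbb{P}^1)$ acts with finitely many orbits on the faces of $\Mov(Y/\mathbb{P}^1)$. Now every $\theta\in\PsAut(Y)$ fixes the canonical class $K_Y$, hence preserves $-K_Y=f^{*}\mathcal{O}(1)$ together with the Iitaka fibration $f$ it defines; therefore $\theta$ descends to $N^1(Y/\mathbb{P}^1)$, permutes the faces through $-K_Y$, and one has the inclusion $\PsAut(Y/\mathbb{P}^1)\subseteq\PsAut(Y)$. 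Since a subgroup already has finitely many orbits on this set of faces, so does the full group $\PsAut(Y)$, which is the assertion.

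The main obstacle is Step 2: one must match the faces of the absolute cone $\Move(Y)$ lying on the hyperplane $K_Y^{\perp}$ with the faces of the relative cone $\Mov(Y/\mathbb{P}^1)$, verifying that flops and relative contractions of $Y$ over $\mathbb{P}^1$ exhaust all such faces and that the two group actions are compatible under the quotient $N^1(Y)\to N^1(Y/\mathbb{P}^1)$.
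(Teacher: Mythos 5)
Your proposal is correct and follows essentially the same route as the paper: both arguments invoke Kawamata's relative cone theorem for the Calabi--Yau fiber space $f\colon Y\to\mathbb{P}^1$, identify $N^1(Y/\mathbb{P}^1)$ with $N^1(Y)/\mathbb{R}\cdot[-K_Y]$ so that faces of $\Move(Y)$ through $-K_Y$ correspond to faces of the relative movable cone, and conclude via the inclusion $\PsAut(Y/\mathbb{P}^1)\subseteq\PsAut(Y)$. The identification of absolute faces through $-K_Y$ with relative faces, which you flag as the main obstacle, is exactly the step the paper also treats briefly (noting that every chamber $\Nef(Z)$ contains $[-K_Y]$ since all models carry the fibration), so your outline is faithful to the paper's argument.
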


\begin{remark}
Because $-K_{Y}$ (a fiber) is nef, there are no curves in the region $K > 0$.
\end{remark}

\begin{theorem}
\label{thm:PsAutActsFiniteOrbitsIfH3zero}
$\PsAut(Y)$ acts on the codimension 1 faces of $\Move(Y)$ with finitely many orbits if $H^{3}(Y, \mathbb{C}) = 0$.
\end{theorem}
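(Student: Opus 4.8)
The plan is to show that every codimension $1$ face of $\Move(Y)$ falls into one of exactly three classes, each of which is already known to have finitely many $\PsAut(Y)$-orbits by one of the results stated above; since a finite union of finite-orbit families meets only finitely many orbits in total, the theorem then follows regardless of whether the three classes are individually $\PsAut(Y)$-invariant.

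First I would invoke Theorem \ref{thm:BCHMMovableCone}, which expresses the movable cone as the union of the nef cones $\Nef(Z)$ of the flops $Z$ of $Y$. Let $F$ be a codimension $1$ face of $\Move(Y)$. Since flopping walls are interior walls of this chamber decomposition (each has a nef chamber on either side), no such wall can lie in the relative interior of $F$; hence a general point of $F$ lies in a single chamber $\Nef(Z)$, and $F$ is a facet of $\Nef(Z)$, dual to a unique extremal ray $R$ of $\Curv(Z)$ with $F = \Nef(Z) \cap R^{\perp}$. The associated contraction $\cont_R : Z \to X$ is not small, for otherwise flopping it would produce a chamber on the far side of $F$, contradicting $F \subset \partial\,\Move(Y)$.

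Next I would split into cases according to the sign of $K_Z$ on $R$. Because $-K_Y = f^{\ast}\mathcal{O}(1)$ is nef and the flops are crepant isomorphisms in codimension one carrying $-K_Y$ to a nef class $-K_Z$ under the identification $N^1(Z) \cong N^1(Y)$, we have $K_Z \cdot R \le 0$, and the Remark rules out $K_Z \cdot R > 0$ entirely. If $K_Z \cdot R < 0$, then $\cont_R$ is a $K_Z$-negative extremal (divisorial or fiber-type) contraction of the smooth $3$-fold $Z$, so by Proposition \ref{prop:eliminate6cases} it is of Type $(1)$ or Type $(6)$; accordingly $F$ is a Type $(1)$ or Type $(6)$ face, and Theorem \ref{thm:mainTheorem} produces finitely many $\PsAut(Y)$-orbits of such faces (using $H^{3}(Y,\mathbb{C}) = 0$ in the Type $(1)$ case). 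If instead $K_Z \cdot R = 0$, then $-K_Z$ is nef and satisfies $-K_Z \cdot R = 0$, so $-K_Z \in \Nef(Z) \cap R^{\perp} = F$; translating through $N^1(Z) \cong N^1(Y)$ this says $-K_Y \in F$, and Theorem \ref{thm:Kawamata} gives finitely many $\PsAut(Y)$-orbits of faces containing $-K_Y$.

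Combining the cases, every codimension $1$ face of $\Move(Y)$ belongs to one of the three families, each with finitely many $\PsAut(Y)$-orbits, so there are finitely many orbits in total. The main obstacle is the reduction in the second and third paragraphs: one must confirm that $-K_Z$ stays nef on every flop $Z$ (equivalently that the K3 fibration structure and the identity $-K_Z = f_Z^{\ast}\mathcal{O}(1)$ persist), so that the trichotomy on $\operatorname{sign}(K_Z \cdot R)$ is genuinely exhaustive, and that a boundary facet of $\Move(Y)$ really does arise as a non-small extremal contraction of a single flop $Z$. Granting these structural facts, the classification is clean and the statement is essentially the assembly of Theorem \ref{thm:mainTheorem} and Theorem \ref{thm:Kawamata} along the $K_Z < 0$ versus $K_Z = 0$ dichotomy.
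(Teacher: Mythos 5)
Your proposal is correct and follows essentially the same route as the paper: decompose $\Move(Y)$ into nef chambers of flops via Theorem \ref{thm:BCHMMovableCone}, note that nefness of $-K_Z$ on every flop rules out $K_Z\cdot R>0$, handle the $K_Z\cdot R<0$ rays via Proposition \ref{prop:eliminate6cases} and Theorem \ref{thm:mainTheorem} (with $H^3(Y,\mathbb{C})=0$ for Type (1)), and handle the rays on $K_Z^{\perp}$ via Kawamata's result on faces containing $-K_Y$. The only cosmetic difference is that you cite the packaged statement (Theorem \ref{thm:Kawamata}) where the paper cites the relative form (Theorem \ref{thm:KawamataRelative}), from which that statement is derived, so the arguments coincide.
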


\begin{conjecture}
\label{conj:codimOneFaces}
$\PsAut(Y)$ acts with finitely many orbits on the faces of $\Move(Y)$ of Type (1) for any genus $g \geq 0$.
\end{conjecture}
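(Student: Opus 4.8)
The plan is to reduce each Type (1) face to a single curve class on a general K3 fiber and then invoke an arithmeticity statement together with a Sterk-type finiteness theorem on the fiber. Fix a general smooth fiber $F$ of $f$, a K3 surface. Given a flop $Z$ of $Y$ and a Type (1) contraction $\sigma\colon Z \to X$ of an exceptional divisor $E$ onto a smooth curve $\Gamma$ of genus $g$, the ruling $\ell$ of $E \to \Gamma$ satisfies $-K_Z \cdot \ell = 1$, and since $-K_Z = f^{*}\mathcal{O}(1)$ this forces $f|_{\ell}$ to have degree one. First I would use this to show that $E \cong \Gamma \times \mathbb{P}^1$ with $f|_E$ the second projection, so that $C := E|_{F} = E \cap F$ is a copy of $\Gamma$ sitting inside the K3 surface $F$. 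Adjunction on $F$ then gives $C^2 = -K_Y \cdot E^2 = 2g-2$. Thus every Type (1) face produces a curve class $C$ in the invariant Picard lattice $\Lambda^{\mathrm{inv}} \subset \Pic(F)$ (the image of the restriction map $\Pic(Y) \to \Pic(F)$, which lands in the monodromy invariants), of self-intersection determined by the genus.

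Next I would make this assignment equivariant. Since a pseudoautomorphism preserves $K_Y$, it preserves $-K_Y = f^{*}\mathcal{O}(1)$ and hence the fibration defined by $|-mK_Y|$; it therefore descends to an automorphism of $\mathbb{P}^1$ and acts on $\Lambda^{\mathrm{inv}}$ by lattice isometries, giving a homomorphism $\PsAut(Y) \to O(\Lambda^{\mathrm{inv}})$ with image $G$. Because restriction intertwines the $\PsAut(Y)$-action on $\Pic(Y)$ with the $G$-action on $\Lambda^{\mathrm{inv}}$, the class $C$ attached to a face transforms equivariantly, and in particular $\PsAut(Y)$ preserves $C^2$, i.e. the genus $g$. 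It then suffices to prove that $G$ acts with finitely many orbits on the classes $C$ arising from Type (1) faces, together with the observation that each orbit of classes accounts for only finitely many faces (the kernel of restriction is $\mathbb{R}[-K_Y]$, and the effectivity and extremality of $E$ bound the vertical ambiguity).

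For fixed genus $g \geq 1$ the class $C$ is effective with $C^2 = 2g - 2 \geq 0$, hence lies in the closure of the positive cone of $F$; for $g = 0$ it is a $(-2)$-class, the case already handled in Theorem \ref{thm:mainTheorem}(2). Granting that $G$ is an \emph{arithmetic} subgroup of $O(\Lambda^{\mathrm{inv}})$ — of finite index in the integral orthogonal group — the classical finiteness of orbits of lattice vectors of fixed norm inside the positive cone of an indefinite lattice (the hyperbolic, Sterk-type form of this statement for K3 lattices) yields finitely many $G$-orbits of such $C$ for each fixed $g$. Combined with the equivariant, finite-to-one correspondence above, this gives finitely many $\PsAut(Y)$-orbits of Type (1) faces of each fixed genus.

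The hard part will be twofold. The first and principal obstacle is establishing the arithmeticity of $G$: one must show that pseudoautomorphisms of $Y$ realize a finite-index subgroup of the Hodge isometries of $\Lambda^{\mathrm{inv}}$ preserving the relevant chamber. Under the hypothesis $H^{3}(Y,\mathbb{C}) = 0$ the variation of Hodge structure of the fibration has no moving part, which is what makes this identification tractable and confines all Type (1) centers to genus $0$; without it, controlling the image of $\PsAut(Y)$ through the Torelli theorem for the generic fiber is exactly the open point. The second obstacle, needed if one wants finiteness over \emph{all} genera simultaneously rather than genus by genus, is to bound the genus: since $\PsAut(Y)$ preserves $g$, the full statement requires showing that only finitely many values $2g-2$ occur as $-K_Y \cdot E^2$ among Type (1) exceptional divisors, a boundedness that does not follow from the Sterk-type input and appears to be the deepest point of the conjecture.
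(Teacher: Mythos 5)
You should first note what the paper actually does with this statement: it is stated as a \emph{conjecture}, with no proof offered. The paper only proves the $g=0$ case (Theorem \ref{thm:mainTheorem}(2), under $H^3(Y,\mathbb{C})=0$), so there is no argument of the author's to compare yours against, and your proposal must stand or fall as a proof on its own. Its geometric core is sound and coincides with the paper's $g=0$ mechanism: $-K_Z\cdot \ell = 1$ forces $f|_\ell$ to be an isomorphism, whence $E\cong \Gamma\times\mathbb{P}^1$ via $(g,f|_E)$, and $C=E\cap F$ is a genus-$g$ curve with $C^2 = E^2\cdot(-K_Y) = 2g-2$; equivariance under $\PsAut(Y)$ is also fine, since a pseudoautomorphism preserves the pluricanonical systems $|-mK_Y|$ and hence the fibration. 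But your central input --- arithmeticity of the image $G\subset O(\Lambda^{\mathrm{inv}})$ --- is both unproven and heavier than needed for what it buys. For \emph{fixed} $g\ge 1$ the class $E_\eta\subset Y_\eta$ is irreducible with $E_\eta^2 = 2g-2\ge 0$, hence nef, and the paper's own toolkit already yields per-genus finiteness: Theorem \ref{thm:SterkLineBundle} applied to the generic fiber (as the paper does in its Type (6) argument, using Theorem \ref{thm:SterkKawamata} to work over $\mathbb{C}(\mathbb{P}^1)$) together with $\Aut(Y_\eta)\subset\PsAut(Y)$ (Proposition \ref{prop:AutSubsetPsAut}) gives finitely many orbits of nef classes of fixed square, with no arithmeticity statement required. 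You would still owe the ``finite-to-one'' bookkeeping you gesture at: distinct Type (1) faces, possibly living on different flops $Z$, can involve the same divisor class on $Y$, so one must check each class in $\Pic(Y_\eta)$ supports only boundedly many faces modulo the action; the paper handles this implicitly for $g=0$ via its injection to $(-2)$-curves, and your one-line remark about $\mathbb{R}[-K_Y]$ does not yet do it for $g\ge 1$.

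The genuine gap --- which, to your credit, you identify explicitly in your final paragraph --- is genus boundedness, and it is fatal to the proposal as a proof. Since the $\PsAut(Y)$-action preserves $C^2=2g-2$, finitely many orbits on \emph{all} Type (1) faces forces only finitely many genera to occur; so any proof of Conjecture \ref{conj:codimOneFaces} must show that $-K_Y\cdot E^2$ takes only finitely many values among Type (1) exceptional divisors over all flops of $Y$. Neither Sterk-type finiteness (which fixes the square in advance) nor arithmeticity of $G$ (orbits of vectors of \emph{fixed} norm) addresses classes of unbounded square, and nothing in your reduction rules out infinitely many genera appearing. What you have, modulo the bookkeeping above, is per-genus finiteness --- strictly weaker than the conjecture, and plausibly exactly why the author left it as a conjecture rather than a theorem. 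A complete attack would need a boundedness statement for the centers $\Gamma$ (equivalently for the nef classes $E_\eta$ arising from actual contractions on SQMs), which is the missing idea here.
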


\begin{theorem}
\label{thm:logCY3orbits}
Assume that Conjecture \ref{conj:codimOneFaces} holds. Then $\PsAut(Y)$ acts with finitely many orbits on the codimension 1 faces of $\Move(Y)$.
\end{theorem}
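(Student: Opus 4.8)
The plan is to reduce the statement to the three finiteness inputs already at hand---Theorem \ref{thm:mainTheorem}(1), Theorem \ref{thm:Kawamata}, and the assumed Conjecture \ref{conj:codimOneFaces}---by sorting every codimension $1$ face of $\Move(Y)$ according to the sign of $K$ along the extremal ray dual to it. First I would recall, via Theorem \ref{thm:BCHMMovableCone}, that $\Move(Y)$ is tiled by the nef cones $\Nef(Z)$ of the flops $Z$ of $Y$, and that $\PsAut(Y)$ acts on $\Move(Y)$ preserving this decomposition. A codimension $1$ face $\mathcal{F}$ of $\Move(Y)$ lies on $\partial\,\Move(Y)$, so where it meets the interior of a chamber it coincides with a codimension $1$ face of $\Nef(Z)$ for some flop $Z$, dual to an extremal ray $R \subset \Curv(Z)$. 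Since $\mathcal{F}$ is a genuine boundary facet rather than an interior wall shared by two flops, the associated contraction $\cont_R : Z \to X$ is divisorial or of fiber type (a flopping contraction would place $\mathcal{F}$ in the interior).

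Next I would set up the case division on $K_Z \cdot R$. Because flops are crepant and isomorphisms in codimension $1$, one has $-K_Z = -K_Y$, which remains nef; by the remark that a nef $-K_Y$ forces every curve into the region $K \le 0$, no extremal ray $R$ satisfies $K_Z \cdot R > 0$, so the three a priori possibilities collapse to two. If $K_Z \cdot R < 0$, then Proposition \ref{prop:eliminate6cases} shows $\cont_R$ is of Type $(1)$ or Type $(6)$, so $\mathcal{F}$ is a Type $(1)$ or Type $(6)$ face. If $K_Z \cdot R = 0$, then $-K_Z = -K_Y$ is a nef class lying in $R^{\perp}$, hence $-K_Y \in \mathcal{F}$, placing $\mathcal{F}$ among the faces containing $-K_Y$. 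This exhausts all codimension $1$ faces.

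Finally I would invoke the three finiteness statements: $\PsAut(Y)$ acts with finitely many orbits on the Type $(6)$ faces by Theorem \ref{thm:mainTheorem}(1), on the Type $(1)$ faces---for any genus $g \ge 0$---by the assumed Conjecture \ref{conj:codimOneFaces}, and on the faces containing $-K_Y$ by Theorem \ref{thm:Kawamata}. A finite union of classes each carrying finitely many orbits again carries finitely many orbits, which gives the theorem. The substantive difficulty here is not in this assembly---which merely substitutes Conjecture \ref{conj:codimOneFaces} for the use of Theorem \ref{thm:mainTheorem}(2) that required $H^{3}(Y,\mathbb{C})=0$---but in pinning down the bookkeeping behind the case division: that every boundary facet of $\Move(Y)$ is dual to a single extremal ray of some flop $Z$ and thereby carries a well-defined Mori contraction type, and that the $\PsAut(Y)$-action is compatible across the different flops so that the orbit counts on the three classes are meaningful for $\Move(Y)$ as a whole rather than chamber by chamber.
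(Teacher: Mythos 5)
Your proposal is correct and follows essentially the same route as the paper: the paper likewise tiles $\Move(Y)$ by nef cones of flops, sorts the boundary facets by the sign of $K_Z$ on the dual extremal ray (no $K_Z>0$ rays since $-K_Y$ is nef, Type (1)/(6) in the $K_Z<0$ region via Proposition \ref{prop:eliminate6cases}, and faces containing $-K_Y$ on $K_Z^{\perp}$ via Theorem \ref{thm:KawamataRelative}), and then assembles the finiteness statements exactly as you do, with Conjecture \ref{conj:codimOneFaces} replacing the $H^{3}(Y,\mathbb{C})=0$ case of Theorem \ref{thm:mainTheorem}(2). The bookkeeping you flag as the substantive point is handled in the paper by the set $S$ and its surjection onto the codimension 1 faces of $\Move(Y)$, matching your setup.
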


In Section \ref{sec:examples}, we show that there are many examples of smooth projective 3-folds $Y$ admitting a {\it K}3 fibration $f: Y \rightarrow \mathbb{P}^{1}$ such that $-K_{Y} = f^{\ast} \mathcal{O}(1)$ and $Y$ has infinite pseudoautomorphism group. \\

\noindent {\bf Acknowledgements.} I thank Paul Hacking, J\'{anos} Koll\'{a}r, and Chenyang Xu for very helpful discussions and suggestions. \\

%
%

\section{Proof of main theorem}
\label{sec:proofOfMainTheorem}

Here we prove Theorem \ref{thm:mainTheorem}. Let $Y$ be a smooth projective 3-fold admitting a {\it K}3 fibration $f: Y \rightarrow \mathbb{P}^{1}$ such that $-K_{Y} = f^{\ast}\mathcal{O}(1)$. Choose
\begin{center}
$\Delta = \displaystyle{\tfrac{1}{2}}F_{1} + \displaystyle{\tfrac{1}{2}}F_{2}$,
\end{center}
where $F_{1}$ and $F_{2}$ are smooth, distinct fibers. Then the pair $(Y, \Delta)$ is klt log Calabi-Yau, since the coefficients of $\Delta$ are strictly less than one, and $K_{Y} + \Delta \sim_{\mathbb{Q}} 0$. Note that we are in the setting that Totaro considered in \cite{T10}. Thus, Theorem \ref{thm:mainTheorem} confirms the special case of the Kawamata-Morrison-Totaro cone conjecture.

\begin{theorem}
\label{thm:KMTconeConjecture}
(The Kawamata-Morrison-Totaro cone conjecture, \cite{T10}, Conjecture 2.1). Let $(Y, \Delta)$ be a klt log Calabi-Yau pair. Then
\begin{enumerate}
\item $\Aut(Y, \Delta)$ acts on $\Nefe(Y)$ with a rational polyhedral fundamental domain; and
\item $\PsAut(Y, \Delta)$ acts on $\Move(Y)$ with a rational polyhedral fundamental domain.
\end{enumerate}
\end{theorem}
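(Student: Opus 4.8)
The plan is to follow the standard two-step reduction, treating part (2) as a consequence of part (1) applied across small modifications. For part (2), I would first invoke the minimal-model-program description of the movable cone recorded in Theorem \ref{thm:BCHMMovableCone}: after intersecting with the effective cone, $\Move(Y)$ is tiled by the pullbacks of the cones $\Nefe(Y_i)$ as $Y_i \dashrightarrow Y$ ranges over the small $\mathbb{Q}$-factorial modifications (flops) of $Y$. The group $\PsAut(Y,\Delta)$ permutes these chambers, and two chambers are identified exactly when the corresponding models are related by a pseudoautomorphism. Thus the movable-cone statement splits into (a) a rational polyhedral fundamental domain for $\Aut(Y_i,\Delta_i)$ on each chamber $\Nefe(Y_i)$, which is part (1) for the model $Y_i$, and (b) finiteness of the set of flops modulo the $\PsAut$-action, so that finitely many chambers suffice; the remaining work is the gluing step that assembles these per-chamber domains into a single rational polyhedral fundamental domain.

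For part (1), the central difficulty is that $K_Y + \Delta \equiv 0$, so the cone theorem produces no $(K_Y+\Delta)$-negative extremal rays inside $\Nef(Y)$ and hence gives no a priori control on the shape of $\Nefe(Y)$; the boundary between $\Nef$ and $\Eff$, together with a possibly ``round'' transcendental part of the cone, is governed entirely by the automorphism group. I would reduce to a statement about a subgroup $\Gamma \subseteq \Aut(Y,\Delta)$ acting on $\Nefe(Y) \subset N^1(Y)_{\mathbb{R}}$, preserving the integral structure and the effective cone, and then apply a Looijenga--Totaro type criterion: if one can exhibit a single rational polyhedral cone $\Pi$ with $\Gamma \cdot \Pi \supseteq \Nefe(Y)$, then $\Gamma$ admits a rational polyhedral fundamental domain on $\Nefe(Y)$. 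The geometric input must therefore be used precisely to produce such a $\Pi$, for instance through finiteness of the extremal contractions (or of the relevant $(-2)$-classes in the surface case) modulo $\Gamma$.

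The main obstacle is exactly this last orbit-finiteness: showing that the faces of $\Nefe(Y)$ fall into finitely many $\Gamma$-orbits. In dimension two Totaro secures this via the structure of a hyperbolic reflection group acting on a signature-$(1,n)$ lattice, where a finitely-faceted fundamental domain exists by the geometry of hyperbolic space; in higher dimension there is no such automatic mechanism, which is why the statement remains a conjecture in general. Consequently I would not attempt the full statement from scratch: the honest content is the reduction above together with an appeal to Totaro's theorem \cite{T10} in the cases where it is established, and the present paper's contribution is to supply the missing finiteness in the K3-fibered threefold setting, where the fibration $f : Y \to \mathbb{P}^1$ and Mori's classification of extremal rays furnish the needed control on the faces of $\Move(Y)$.
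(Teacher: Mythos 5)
Your proposal is correct in the one respect that matters: this statement, despite being displayed in a theorem environment, is Totaro's Conjecture 2.1 cited from \cite{T10}, and the paper supplies no proof of it whatsoever --- it is invoked only as the motivating conjecture whose special-case consequences (Theorem \ref{thm:mainTheorem} and Theorem \ref{thm:PsAutActsFiniteOrbitsIfH3zero}) the paper actually establishes. Your reduction sketch (BCHM chamber decomposition plus a Looijenga--Totaro fundamental-domain criterion) and your ultimate refusal to prove the general statement, deferring instead to \cite{T10} where it is known (dimension two) and to the K3-fibration arguments for the new content, matches the paper's treatment exactly.
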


Because we are in the klt log Calabi-Yau setting, we may use a result implied by Birkar-Cascini-Hacon-McKernan \cite{BCHM10}, stated as Theorem \ref{thm:BCHMMovableCone} below. This allows us to describe the movable cone of $Y$ as the union of the nef cones of $Z$ for $Y \dashrightarrow Z$ an SQM (small $\mathbb{Q}$-factorial modification).

\begin{theorem}(cf. \cite{BCHM10}, Corollary 1.1.5)
\label{thm:BCHMMovableCone}
Let $(Y, \Delta)$ be klt where $\Delta$ is an effective $\mathbb{Q}$-divisor on $Y$ and $K_{Y} + \Delta \sim_{\mathbb{Q}} 0$. Then we have a decomposition
\begin{center}
$\Move(Y) = \bigcup_{Y \dashrightarrow Z} \Nefe(Z)$
\end{center}
that is locally rational polyhedral in the interior of $\Mov(Y)$, where the union is taken over SQMs $Y \dashrightarrow Z$.
\end{theorem}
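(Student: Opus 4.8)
The plan is to derive the decomposition from the Minimal Model Program for klt pairs as developed in \cite{BCHM10}, exploiting the fact that in the log Calabi-Yau case $K_Y + \Delta \sim_{\mathbb{Q}} 0$ every step of the relevant MMP must be small. First I would fix the identification of Néron--Severi spaces: for any SQM $\phi : Y \dashrightarrow Z$, since $\phi$ is an isomorphism in codimension one, taking strict transforms of divisors induces an isomorphism $N^1(Y)_{\mathbb{R}} \cong N^1(Z)_{\mathbb{R}}$ carrying $\Eff(Y)$ to $\Eff(Z)$ and $\Mov(Y)$ to $\Mov(Z)$; under this identification I regard each $\Nefe(Z)$ as a subcone of $N^1(Y)_{\mathbb{R}}$. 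I would also record that $(Z, \Delta_Z)$, with $\Delta_Z = \phi_{\ast}\Delta$, is again klt with $K_Z + \Delta_Z \sim_{\mathbb{Q}} 0$, so that every SQM of $Y$ is a log Calabi-Yau pair of the same type.

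The containment $\bigcup_Z \Nefe(Z) \subseteq \Move(Y)$ is the easy direction. A large multiple of an ample divisor is base-point free, hence has no fixed part, so every ample class is movable; taking closures gives $\Nef(Z) = \overline{\Amp(Z)} \subseteq \Mov(Z)$, since $\Mov(Z)$ is by definition a closed cone. Intersecting with $\Eff(Z)$ and transporting back to $Y$ through the codimension-one identification yields $\Nefe(Z) \subseteq \Move(Y)$.

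The substance lies in the reverse containment $\Move(Y) \subseteq \bigcup_Z \Nefe(Z)$. I would first treat a class $D$ in the interior of $\Mov(Y)$, where $D$ is big and movable. For small $\epsilon > 0$ the pair $(Y, \Delta + \epsilon D)$ is klt (its coefficients remain below one) with big boundary, and $K_Y + \Delta + \epsilon D \equiv \epsilon D$. Running a $(K_Y + \Delta + \epsilon D)$-MMP with scaling, which terminates by \cite{BCHM10} because the boundary is big, produces a minimal model $Z$ on which the transform of $D$ is nef. The crucial point is that because $D$ is movable no step can be a divisorial contraction: such a step would be $D$-negative along the contracted divisor $E$, forcing $E$ into the stable base locus $\mathbf{B}(D)$ and contradicting movability. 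Hence every step is a flip, $Y \dashrightarrow Z$ is small, i.e.\ an SQM, and the transform of $D$ lies in $\Nef(Z) \cap \Eff(Z) = \Nefe(Z)$. Movable classes on the boundary of the interior are then recovered by a limiting argument using the local finiteness established next.

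For the local rational polyhedrality in the interior of $\Mov(Y)$, around a big movable class $D$ I would invoke the finiteness of minimal models for a big boundary (\cite{BCHM10}, Corollary 1.1.5): only finitely many SQMs $Z$ have $\Nef(Z)$ meeting a fixed neighborhood of $D$, and the corresponding nef cones subdivide that neighborhood into finitely many rational polyhedral chambers. I expect the main obstacle to be precisely this interior analysis --- checking that the $D$-scaled MMP is small and terminates, and matching its output models with the Mori chambers of \cite{BCHM10} --- together with the limiting step needed to pass from the big (interior) classes, where the finiteness theorem applies directly, to the whole movable cone. The non-big classes on the boundary are exactly the reason the statement asserts the polyhedral structure only in the interior of $\Mov(Y)$.
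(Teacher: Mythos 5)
The paper itself gives no proof of this statement: it is imported wholesale from \cite{BCHM10} with a ``cf.''\ citation, so there is no internal argument to compare against, and your proposal must be judged on its own terms. Your easy inclusion and your treatment of classes in the \emph{interior} of $\Mov(Y)$ follow the standard derivation and are essentially sound: for $D$ big and movable, $(Y,\Delta+\epsilon D)$ is klt for small $\epsilon$ (because the log canonical threshold of an effective divisor with respect to a klt pair is positive --- not merely because ``coefficients stay below one''), the boundary is big, the MMP with scaling terminates by \cite{BCHM10}, and each step is $D$-negative, hence small. One imprecision worth fixing: membership of the contracted divisor $E$ in the stable base locus of $D$ does not by itself contradict $D\in\Mov(Y)$. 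The correct argument is quantitative: curves $C$ covering $E$ satisfy $D\cdot C<0$ and $E\cdot C<0$, so every effective divisor numerically equivalent to $mD$ contains $E$ with multiplicity at least $m(D\cdot C)/(E\cdot C)$; thus the asymptotic multiplicity $\sigma_E(D)$ is strictly positive, contradicting the fact that $\sigma_E$ vanishes on big movable classes (by continuity of $\sigma_E$ on the big cone).

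The genuine gap is your treatment of effective movable classes that are \emph{not} big, i.e.\ classes on the boundary of $\Mov(Y)$. This is not a deferrable technicality: the theorem is applied in this paper precisely to codimension-one faces of $\Move(Y)$, which lie on that boundary. Your ``limiting argument using the local finiteness'' does not work as stated, because the finiteness you establish --- finitely many chambers $\Nef(Z)$ meet a neighborhood of a \emph{big} class --- is available only at interior points. Nothing in your argument prevents infinitely many nef chambers from accumulating along the boundary of $\Mov(Y)$, and a union of infinitely many closed cones need not be closed; so writing $D=\lim_{t\to 0}(D+tA)$ with each $D+tA\in\Nefe(Z_t)$ does not place $D$ in any single $\Nefe(Z)$. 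To close the gap you need either (i) a pigeonhole argument: finitely many models suffice for the whole segment $\{\Delta+\epsilon(D+tA)\}_{t\in[0,1]}$, choose a sequence $t_n\to 0$ realized on one fixed SQM $Z$, and conclude by closedness of the single cone $\Nef(Z)$ --- but the finiteness statement of \cite{BCHM10}, Corollary 1.1.5 requires a fixed ample part in the boundary, which degenerates as $t\to 0$, so this step carries real content (it is exactly what the paper's ``cf.''\ conceals); or (ii), much simpler and sufficient for this paper, use that $\dim Y=3$: termination of flips for klt $3$-fold pairs (\cite{KM98}) lets you run the $(K_Y+\Delta+\epsilon D)$-MMP for \emph{any} effective movable $D$, big or not, without scaling and without bigness of the boundary; it cannot end in a Mori fiber space since $K_Y+\Delta+\epsilon D\sim_{\mathbb R}\epsilon D$ is pseudo-effective, all steps are small by your movability argument, and the output is an SQM on which the transform of $D$ is nef and effective. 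With (ii) the limiting step disappears entirely.
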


In our setting, the only type of SQM that appears is a flop (or a composition of finitely many flops). The reason is because $-K_{Y} = f^{\ast}\mathcal{O}(1)$, a fiber, and thus $-K_{Y}$ is nef, that is, $-K_{Y} \cdot C \geq 0$ for any curve $C \subset Y$. If $C \subset Y$ is an exceptional curve on an SQM, then $-K_{Y} \cdot C \geq 0$ because a smooth 3-fold has no flips (\cite{M82}). So $K_{Y} \cdot C = 0$, which corresponds to a flop. Also note that a flop of a smooth 3-fold is again smooth (\cite{K90}).

Let $S$ denote the following set:
\begin{center}
$S := \displaystyle{\bigcup_{Y \dashrightarrow Z}} \{ \text{codimension 1 faces of } \Nef(Z) \text{ in the boundary of } \Mov(Y) \}$,
\end{center}
where the union is taken over all flops $Y \dashrightarrow Z$. There is a surjection from $S$ to the collection of all codimension 1 faces of $\Mov^{e}(Y)$. Moreover, the elements of $S$ are in one-to-one correspondence with the elements of
\begin{center}
$\displaystyle{\bigcup_{Y \dashrightarrow Z}} \{ \text{extremal rays of } \Curv(Z) = \Nef(Z)^{\ast} \text{ not corresponding to small contractions} \}$.
\end{center}
The goal is to narrow down these extremal rays and the main tools used to accomplish this are the following:

\begin{enumerate}
\item Mori's cone theorem (\cite{KM98}, Theorem 1.24); and
\item Mori's classification of extremal rays of the cone of curves for smooth 3-folds (\cite{M82}).
\end{enumerate}

We state Mori's classification of extremal rays below for the reader's convenience.

\begin{remark}
Because $-K_{Z}$ is nef, there are no curves in the $K_{Z} > 0$ region. Therefore we only need to consider the $K_{Z} < 0$ region and the hyperplane $-K_{Z}^{\bot}$.
\end{remark}

\begin{theorem} 
\label{thm:MoriClassification}
(Mori's classification of extremal rays of $\Curv(Y)$, \cite{KM98}, Theorem 1.32) Let $X$ be a nonsingular projective 3-fold over $\mathbb{C}$ and $\mathrm{\cont_{R}}: Y \rightarrow X$ the contraction of a $K_{Y}$-negative extremal ray $R \subset \Curv(Y)$. Then we have the following possibilities:
\begin{enumerate}
\item $\mathrm{\cont_{R}}$ is the (inverse of the) blowup of a smooth curve in the smooth 3-fold $X$.
\item $\mathrm{\cont_{R}}$ is the (inverse of the) blowup of a smooth point of the smooth 3-fold $X$.
\item $\mathrm{\cont_{R}}$ is the (inverse of the) blowup of an ordinary double point of $X$ (locally analytically, the point is given by $x^{2} + y^{2} + z^{2} + w^{2} = 0$).
\item $\mathrm{\cont_{R}}$ is the (inverse of the) blowup of a point of $X$ that is locally analytically given by $x^{2} + y^{2} + z^{2} + w^{3} = 0$.
\item $\mathrm{\cont_{R}}$ contracts a smooth $\mathbb{P}^{2}_{\mathbb{C}}$ with normal bundle $\mathcal{O}(-2)$ to a point on $Y$ which is locally analytically the quotient of $\mathbb{C}^{3}$ by the involution $(x, y, z) \mapsto (-x, -y, -z)$.
\item $\dim(X) = 2$ and $\mathrm{\cont_{R}}$ is a fibration with plane conics as fibers (that is, $\mathrm{\cont_{R}}$ is a conic bundle).
\item $\dim(X) = 1$ and the general fibers of $\cont_{R}$ are del Pezzo surfaces.
\item $\dim(X) = 0$ and $-K_{Y}$ is ample so that $Y$ is a Fano manifold.
\end{enumerate}
\end{theorem}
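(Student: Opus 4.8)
The plan is to reprove Mori's classification along the lines of his original argument, which separates into an existence-and-contraction part and a fine geometric classification part.

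\emph{Existence and contraction of extremal rays.} The first step is to show that every $K_Y$-negative extremal ray is generated by a rational curve of bounded anticanonical degree and can be contracted. The engine is bend-and-break. After reduction to characteristic $p$, I would precompose a non-free map $\mathbb{P}^1 \to Y$ with high Frobenius powers to boost its degree, then run a dimension count on the space of maps pinned at two points to force the image to break into a connected cycle containing a rational curve. Iterating and lifting back to characteristic zero yields, whenever $K_Y \cdot C < 0$ for some curve $C$, a rational curve $C'$ with $0 < -K_Y \cdot C' \le \dim Y + 1 = 4$. This gives the Cone Theorem structure: the $K_Y$-negative part of $\Curv(Y)$ is locally finite polyhedral, spanned by such extremal rational curves. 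Applying the base-point-free theorem to a nef supporting divisor $H$ with $H^{\perp} \cap \Curv(Y) = R$ then produces, for each extremal ray $R$, a morphism $\cont_R \colon Y \to X$ onto a normal projective variety contracting exactly the curves whose class lies in $R$.

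\emph{Dichotomy and the fiber-type cases.} Next I would fix a minimal extremal rational curve $C$ generating $R$, set $d := -K_Y \cdot C$ with $1 \le d \le 4$, and record Grothendieck's splitting $N_{C/Y} \cong \mathcal{O}(a) \oplus \mathcal{O}(b)$ with $a + b = d - 2$. Then separate the regimes $\dim X < 3$ (fiber type) and $\dim X = 3$ (birational). In the fiber-type case the deformations of $C$ cover $Y$ and $-K_Y$ is $\cont_R$-ample. If $\dim X = 0$ then $-K_Y$ is ample and $Y$ is Fano, giving case (8). If $\dim X = 1$ the fibers are surfaces carrying an ample restriction of $-K_Y$, hence del Pezzo surfaces, giving case (7). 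If $\dim X = 2$ the general fiber is a rational curve with $-K_Y \cdot (\text{fiber}) = 2$; after verifying flatness one identifies $\cont_R$ as a conic bundle, the fibers embedding as plane conics by the relative anticanonical map, giving case (6).

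\emph{Birational case and the local classification.} When $\dim X = 3$, the crucial input is that a smooth $3$-fold admits no small $K_Y$-negative extremal contraction, so the exceptional locus is a prime divisor $E$. The restriction $\cont_R|_E$ either maps $E$ onto a curve or contracts $E$ to a point. If $\cont_R(E)$ is a curve $\Gamma$, I would show $E$ is a $\mathbb{P}^1$-bundle over the smooth curve $\Gamma$ and that $\cont_R$ is the blowup of $\Gamma$ inside the smooth $3$-fold $X$, giving case (1). If $E$ is contracted to a point, relative ampleness together with extremality pins down $-K_Y|_E$, forcing $E$ onto a short list of del Pezzo surfaces ($\mathbb{P}^2$ or a possibly singular quadric surface) with a prescribed normal bundle $N_{E/Y}$; reconstructing the analytic germ of $X$ at the image point from the pair $(E, N_{E/Y})$ then yields cases (2)--(5), with $(\mathbb{P}^2, \mathcal{O}(-1))$ giving a smooth point, $\mathbb{P}^1 \times \mathbb{P}^1$ with $\mathcal{O}(-1,-1)$ giving the ordinary double point, the quadric cone giving the point $x^2 + y^2 + z^2 + w^3 = 0$, and $(\mathbb{P}^2, \mathcal{O}(-2))$ giving the quotient singularity $\mathbb{C}^{3}/\{\pm 1\}$.

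\emph{Main obstacle.} The two genuinely hard steps are the bend-and-break production of the extremal rational curve, which forces a detour through positive characteristic since no purely characteristic-zero argument is known to produce it, and the exact determination of the singularity of $X$ at a contracted point in cases (3)--(5). The latter is a delicate local analytic computation: extremality bounds the length $d$ and the normal bundle degree, but extracting the precise equation of the germ --- separating the node of case (3) from the singularity of case (4), and detecting the $\mathbb{Z}/2$ quotient of case (5) --- requires analyzing how the extremal curves through points of $E$ deform and studying the conormal algebra $\bigoplus_{n} I_E^{n}/I_E^{n+1}$ along $E$.
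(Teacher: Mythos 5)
The paper does not prove this theorem at all: it is quoted verbatim as a known result, cited from \cite{KM98}, Theorem 1.32, and originally due to Mori \cite{M82}, and it is used downstream as a black box (in Proposition \ref{prop:eliminate6cases}). So there is no internal proof to compare against; what you have written is an outline of the standard literature proof, and as such it is essentially accurate: bend-and-break via Frobenius twisting in characteristic $p$ to produce extremal rational curves with $0 < -K_Y \cdot C' \le 4$, the cone theorem, existence of $\cont_R$ from a nef supporting divisor, the fiber-type/birational dichotomy, and the case analysis terminating in the five birational local models and three fiber-type cases, with the correct pairings $(\mathbb{P}^2, \mathcal{O}(-1))$, $(\mathbb{P}^1 \times \mathbb{P}^1, \mathcal{O}(-1,-1))$, quadric cone, and $(\mathbb{P}^2, \mathcal{O}(-2))$ in cases (2)--(5).

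Two caveats on your outline. First, you invoke ``a smooth $3$-fold admits no small $K_Y$-negative extremal contraction'' as a \emph{crucial input}, but in Mori's development this is not an external fact one may assume --- it is itself part of the classification, proved by deformation theory of the extremal rational curve: curves in the ray move in a family of dimension at least $-K_Y \cdot C \ge 1$, sweeping out a locus of dimension at least $2$ inside the exceptional set, which together with the breaking arguments forces the exceptional locus to be an irreducible divisor. Presenting it as given makes that step of your argument circular, and it is precisely the step the paper's own Proposition \ref{prop:eliminate6cases} relies on when it asserts that every SQM of $Y$ is a flop. Second, a historical-but-substantive point: the base-point-free theorem you use to construct $\cont_R$ postdates Mori's paper (it is the Kawamata--Shokurov route, the one taken in \cite{KM98}); Mori constructed the contractions in dimension $3$ directly from the classification of the exceptional loci. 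Either route is valid in characteristic zero, so this is a legitimate alternative organization rather than a gap, but you should be aware the two halves of your sketch (contraction first, classification second) follow \cite{KM98} rather than \cite{M82}, where the logical order is partly reversed.
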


\begin{proposition}
\label{prop:eliminate6cases}
Of the eight possible types of extremal rays of $\Curv(Y)$ from Theorem \ref{thm:MoriClassification}, we can 
eliminate all but (1) and (6) above: \\
\noindent {\bf Type (1).} The blowup of a smooth curve; and \\
\noindent {\bf Type (6).} A conic bundle $g: Y \rightarrow S$ (meaning that $g$ is a flat morphism and its generic fiber is a rational curve).
\end{proposition}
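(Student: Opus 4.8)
The plan is to exploit the single numerical constraint coming from $-K_Y = f^{\ast}\mathcal{O}(1)$: since $\mathcal{O}(1)$ is trivial on points of $\mathbb{P}^1$, for any irreducible curve $C \subset Y$ one has $-K_Y \cdot C = f^{\ast}\mathcal{O}(1)\cdot C \geq 0$, with equality if and only if $C$ is contained in a fiber of $f$ (and $-K_Y \cdot C > 0$ exactly when $f(C) = \mathbb{P}^1$). I would record this basic dichotomy at the outset. Now let $R$ be a $K_Y$-negative extremal ray and $\cont_R : Y \to X$ its contraction. Because $R$ is $K_Y$-negative, every curve $C$ with $[C] \in R \setminus \{0\}$ satisfies $K_Y \cdot C < 0$, i.e.\ $-K_Y \cdot C > 0$; by the dichotomy such a $C$ dominates $\mathbb{P}^1$ and in particular is never contained in a fiber of $f$. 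Equivalently: no curve contracted by $\cont_R$ can lie in a fiber of $f$.

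The key observation is that types (2), (3), (4), (5), (7), (8) are precisely the cases in which $\cont_R$ contracts an \emph{irreducible surface} $D \subset Y$ to a \emph{point}. Indeed, for (2)--(5) the exceptional divisor $E$ is contracted to a single point of $X$, so I take $D = E$; for (7) I take $D$ to be a component of a $2$-dimensional del Pezzo fiber over a point of the curve $X$; and for (8), where $X$ is a point, I take $D$ to be any K3 fiber of $f$. In every such case $\cont_R(D)$ is a point, so $\cont_R(C)$ is a point for \emph{every} curve $C \subset D$, which means $[C] \in R$ for all curves $C \subset D$.

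I would then derive a contradiction uniformly. Consider the restriction $f|_D : D \to \mathbb{P}^1$. If $f|_D$ is constant, then $D$ is contained in a single fiber of $f$, and any curve $C \subset D$ is a fiber curve with $[C] \in R \setminus \{0\}$, contradicting that $R$ is $K_Y$-negative. If $f|_D$ is dominant, then for a general fiber $F$ of $f$ the intersection $D \cap F$ is a nonempty curve lying in $F$; choosing any irreducible component $C$ of $D \cap F$ we again have $C \subset D$, so $[C] \in R \setminus \{0\}$, while $C \subset F$ forces $-K_Y \cdot C = 0$, i.e.\ $K_Y \cdot C = 0$ --- once more contradicting $K_Y$-negativity of $R$. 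This rules out (2)--(5), (7), (8). (For (8) one may argue even more directly: $Y$ Fano would make $-K_Y = f^{\ast}\mathcal{O}(1)$ ample, which is impossible since it is trivial on the curves of a K3 fiber.)

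Finally I would explain why the same mechanism does \emph{not} eliminate (1) or (6). In these two cases $\cont_R$ contracts no surface to a point: in (6) the contracted loci are the $1$-dimensional conic fibers, and in (1) the exceptional divisor $E$ is contracted to the curve $\Gamma$, the contracted curves being the $1$-dimensional rulings of $E \to \Gamma$. The decisive difference is that, although $E$ still meets a general fiber $F$ in a curve, $E \cap F$ maps \emph{finitely} (not constantly) onto $\Gamma$, so it is not a union of contracted rulings and its class need not lie in $R$; hence no $K_Y$-trivial curve is forced into $R$. Thus (1) and (6) survive, giving the two stated types. The main thing to get right is the verification, type by type, that (2)--(5), (7), (8) genuinely contract a surface to a point while (1) and (6) do not --- this is exactly the geometric feature on which the argument hinges, and it is where Mori's explicit description of each contraction in Theorem \ref{thm:MoriClassification} is used.
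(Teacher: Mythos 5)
Your proposal is correct and takes essentially the same route as the paper: curves contracted by $\cont_{R}$ lie in the $K$-negative ray, hence must dominate $\mathbb{P}^{1}$, while a surface contracted to a point necessarily contains curves inside fibers of $f$ (the fibers of $f\vert_{D}$), which are $K$-trivial --- the paper phrases this contradiction as ``$f\vert_{E}: E \rightarrow \mathbb{P}^{1}$ is a map from a surface to a curve that contracts no curves,'' and disposes of case (8) by the same ampleness obstruction you note parenthetically. The only cosmetic difference is that you fold case (8) into the uniform surface-contracted-to-a-point argument rather than treating it separately.
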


The main idea of the proof of the Theorem \ref{thm:mainTheorem} is as follows. We start with a smooth projective 3-fold $Z$ such that $Z$ and $Y$ are related by a flop (or by a composition of flops). First, we show that the $K_{Z} < 0$ region only contains extremal rays of the cone $\Curv(Z)$ that are of Type (1) or Type (6) (this is Proposition \ref{prop:eliminate6cases}). Then we prove Theorem \ref{thm:mainTheorem} (2), followed by Theorem \ref{thm:mainTheorem} (1), which takes care of the $K_{Z} < 0$ region (meaning that the pseudoautomorphism group acts on all codimension 1 faces of $\Mov(Y)$ corresponding to extremal rays of $\Curv(Z)$ for some $Z$ in the region $K_{Z} < 0$ with finitely many orbits). Finally, we explain how faces corresponding to extremal rays in $-K_{Z}^{\bot}$ are taken care of by results of Kawamata (Theorem \ref{thm:KawamataRelative}). Because no part of $\Curv(Z)$ lies in the $K_{Z} > 0$ region, this completes the proof of Theorem \ref{thm:mainTheorem}.

\begin{proof}
(Proof of Proposition \ref{prop:eliminate6cases}). Let $f: Y \rightarrow X$ be the contraction of a divisor $E \subset Y$ to a point $p \in X$ (so this covers cases (2), (3), (4), and (5)). If we have any curve $C \subset E$, then by definition of the contraction $\mathrm{cont}_{R}$ of an extremal ray $R \subset \Curv(Y)$, the class $[C] \in R \subset \Curv(Y)$. Since $R$ is in the $-K_{Y} > 0$ region, $-K_{Y} \cdot C = f^{\ast}\mathcal{O}(1) \cdot C > 0$. So the induced map $C \rightarrow \mathbb{P}^{1}$ is nonconstant. Now we have a map $f \vert_{E}: E \rightarrow \mathbb{P}^{1}$ from a surface to a curve that does not contract any curves, a contradiction. Therefore this eliminates cases (2), (3), (4), and (5). We can also eliminate (7) by the same argument (replacing $E$ by a fiber of the del Pezzo fibration $\mathrm{cont}_R$).

(8) We cannot have $Y \rightarrow \{p\}$, a point, and $Y$ is a Fano manifold. This is because $-K_{Y}$ defines a fibration $Y \rightarrow \mathbb{P}^{1}$, so $-K_{Y}$ is not ample. This leaves (1) and (6) as the only two possibilities, completing the proof.
\end{proof}

We prove the second claim of Theorem \ref{thm:mainTheorem}.

\begin{proof} (Proof of (2) of Theorem \ref{thm:mainTheorem}). Suppose that the contraction is the blowup of a smooth curve, which may be considered as the contraction of a ruled surface $E$. Then, we have a $\mathbb{P}^{1}$-bundle $g: E \rightarrow \Gamma$, and we claim that it is a trivial $\mathbb{P}^{1}$-bundle in our case. Let $l$ be a fiber of $g$. Because $l \cdot (-K_{Y}) = 1$, and
\begin{align*}
l \cdot (-K_{Y}) &= l \cdot f^{\ast}\mathcal{O}(1) \text{ \; by our assumptions}\\
&= \deg(f \vert_{l}^{\ast} \mathcal{O}(1)),
\end{align*}
so $f \vert_{l}: l \rightarrow \mathbb{P}^{1}$ is an isomorphism. Then, the map $E \rightarrow \Gamma \times \mathbb{P}^{1}$ given by $(g, f \vert_{E})$ is an isomorphism, so that $g: E \rightarrow \Gamma$ is a trivial $\mathbb{P}^{1}$-bundle.

It follows from above that $\Gamma$ is contained in each general fiber $F$ of the {\it K}3 fibration $f$. Now we assume that $H^{3}(Y, \mathbb{C}) = 0$, and we show that this implies that $\Gamma \cong \mathbb{P}^{1}$. If $Y \rightarrow Y^{\prime}$ is the blowup of $\Gamma \subset Y^{\prime}$, then $H^{1}(\Gamma)$ injects into $H^{3}(Y, \mathbb{C})$. Since $H^{3}(Y, \mathbb{C}) = 0$, we have $H^{1}(\Gamma) = 0$, and $\Gamma \cong \mathbb{P}^{1}$. Using the facts that $\Gamma \cong \mathbb{P}^{1}$ and $F$ is a {\it K}3 surface, the adjunction formula shows that $\Gamma$ is a $(-2)$-curve. Therefore every general fiber of the {\it K}3-fibration contains a $(-2)$-curve.

Theorem \ref{thm:SterkKawamata} below (due to Sterk and Kawamata) asserts that the automorphism group of the generic fiber $Y_{\eta}$ of the {\it K}3 fibration $f: Y \rightarrow \mathbb{P}^{1}$ acts on the set of all $(-2)$-curves in $Y_{\eta}$ with finitely many orbits. Because $\Aut(Y_{\eta}) \subset \PsAut(Y)$ (see Proposition \ref{prop:AutSubsetPsAut} below), the larger group $\PsAut(Y)$ must also act on the set of all $(-2)$-curves in $Y_{\eta}$ with finitely many orbits. There is an injection
\begin{center}
$\{E \subset Y \; \vert \; E \text{ an exceptional divisor of Type (1) on }Y\} \hookrightarrow \{(-2)\text{-curves in } Y_{\eta}\}$
\end{center}
which is given by sending any exceptional divisor $E$ of Type (1) to $E_{\eta}$ (the generic fiber of the restriction $f \vert_{E}: E \rightarrow \mathbb{P}^{1}$), and therefore $\PsAut(Y)$ acts on the extremal rays of Type (1) with finitely many orbits, proving (2) of Theorem \ref{thm:mainTheorem}.
\end{proof}

Before proving the first claim of Theorem \ref{thm:mainTheorem}, we first list some important results used in this part of the proof.

\begin{theorem}
\label{thm:SterkKawamata}
(\cite{S85}, Proposition 2.5 and \cite{K97} Remark 2.2 (2)). \label{thm:MCCK3overChar0}
Let $S$ be a {\it K}3 surface over a field $k$ of characteristic zero (not necessarily algebraically closed). Then $\Aut(S)$ acts on $\Nefe(S)$ with a rational polyhedral fundamental domain, and in particular, $\Aut(S)$ acts with finitely many orbits on the set of all $(-2)$-curves $C$ in $S$.
\end{theorem}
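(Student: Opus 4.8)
The plan is to reduce the statement to a lattice-theoretic fact about arithmetic groups acting on the positive cone of a hyperbolic lattice, to use the global Torelli theorem to identify $\Aut(S)$ with such a group, and then to descend to a general field. I would first treat the case $k=\bar k$ (say $\mathbb{C}$).

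\smallskip
Set $L=\mathrm{NS}(S)$, a lattice of signature $(1,\rho-1)$, and let $\mathcal{C}^+\subset L_{\mathbb{R}}$ be the connected component of $\{x:x^2>0\}$ containing an ample class. Reflections $s_\delta$ in the $(-2)$-classes $\delta$ generate the Weyl group $W\trianglelefteq O^+(L)$ (conjugating $s_\delta$ by $g\in O^+(L)$ gives $s_{g\delta}$, and $g\delta$ is again a $(-2)$-class), and $\Nef(S)$ is the closure of a fundamental chamber for $W$ acting on $\mathcal{C}^+$; equivalently $\Nef(S)=\{x\in\overline{\mathcal{C}^+}:x\cdot C\ge 0\text{ for every }(-2)\text{-curve }C\}$. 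Since $W$ acts simply transitively on chambers, the stabilizer $\mathrm{Stab}_{O^+(L)}(\Nef(S))$ maps isomorphically onto $O^+(L)/W$.

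\smallskip
Next I would invoke Torelli. Every automorphism preserves the ample cone, so $\Aut(S)$ maps into $\mathrm{Stab}_{O^+(L)}(\Nef(S))$; the global Torelli theorem, together with the finiteness of the Hodge isometries of the transcendental lattice $T=\mathrm{NS}(S)^{\perp}$ (these act as a finite cyclic group, constraining which isometries of $L$ lift), shows this map has finite kernel and finite-index image. It therefore suffices to produce a rational polyhedral fundamental domain for the action of a finite-index subgroup $\Gamma\le\mathrm{Stab}_{O^+(L)}(\Nef(S))$ on the rational closure $\Nef^{+}(S)$, the convex hull of the rational rays of $\overline{\mathcal{C}^+}$ meeting $\Nef(S)$ (this is the natural cone refining $\Nefe(S)$). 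The core input here, and the step I expect to be the main obstacle, is the general theorem that a finite-index subgroup of $O^+(L)$ admits a rational polyhedral fundamental domain on $\Nef^{+}(S)$: this is proved via the reduction theory of arithmetic groups and the existence of rational polyhedral cone decompositions of Ash--Mumford--Rapoport--Tai type, and it is exactly the content Sterk establishes. Granting it, a rational polyhedral fundamental domain has finitely many codimension-one faces, and since the walls of $\Nef(S)$ are cut out precisely by the $(-2)$-curves, $\Gamma$ — hence $\Aut(S)$ — acts with finitely many orbits on the set of $(-2)$-curves, giving the ``in particular'' clause.

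\smallskip
Finally, for a general field $k$ of characteristic zero I would descend along $G=\Gal(\bar k/k)$. Because the geometric Néron--Severi group is finitely generated with $G$ acting through a finite quotient, one has $\mathrm{NS}(S)=\mathrm{NS}(S_{\bar k})^{G}$, and this finite action preserves $\mathcal{C}^+$ and $\Nef(S_{\bar k})$. I would then run the same chamber/arithmetic-cone argument $G$-equivariantly on the invariant subspace, taking the relevant arithmetic group inside $O^+(\mathrm{NS}(S))$; since $\Aut_k(S)$ is identified with the $G$-fixed part of $\Aut_{\bar k}(S_{\bar k})$, this yields the rational polyhedral fundamental domain for $\Aut(S)$ on $\Nefe(S)$ over $k$, as in Kawamata's remark.
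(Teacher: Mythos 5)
Your proposal is correct and takes essentially the same route as the paper, which establishes this theorem purely by citation: Sterk's theorem over $\mathbb{C}$ (\cite{S85}, whose proof is exactly your Torelli-plus-arithmetic-reduction-theory argument, with the Ash--Mumford--Rapoport--Tai input on rational polyhedral fundamental domains for arithmetic groups on self-adjoint cones), combined with the generalization to non-algebraically-closed fields by Galois descent, due to Oguiso--Sakurai \cite{OS01} and Kawamata \cite{K97}. The only inaccuracy is the claim that $\mathrm{NS}(S)=\mathrm{NS}(S_{\bar k})^{G}$ for arbitrary $k$ of characteristic zero: this map is injective but need not be surjective (its cokernel embeds into $\mathrm{Br}(k)$, which the paper itself sidesteps in Section 3 by invoking Tsen's theorem for function fields of curves); since the cokernel is finite, the two lattices are commensurable and your descent argument goes through unchanged.
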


Theorem \ref{thm:MCCK3overChar0} follows from Morrison's cone conjecture for {\it K}3 surfaces over $\mathbb{C}$ proved by Sterk (\cite{S85}, Proposition 2.5) together with a generalization by Oguiso-Sakurai in \cite{OS01} and Kawamata in \cite{K97}.

\begin{proposition}
\label{prop:AutSubsetPsAut}
Let $Y$ be a smooth projective 3-fold with a {\it K}3-fibration $f: Y \rightarrow \mathbb{P}^{1}$ where $-K_{Y} = f^{\ast}\mathcal{O}(1)$. Let $Y_{\eta}$ denote the generic fiber of the fibration. Then $\Aut(Y_{\eta}) \subset {\it PsAut(Y)}$.
\end{proposition}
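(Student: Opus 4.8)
The plan is to show that every $\phi \in \Aut(Y_\eta)$ spreads out to a birational self-map of $Y$ over $\mathbb{P}^1$ that is an isomorphism in codimension one, the key point being that such a map is automatically crepant because $-K_Y = f^*\mathcal{O}(1)$ is pulled back from the base. First I would pass from $\phi$ to a birational self-map of $Y$. Since $Y_\eta = Y \times_{\mathbb{P}^1} \Spec \mathbb{C}(\mathbb{P}^1)$ has the same function field as $Y$, a $\mathbb{C}(\mathbb{P}^1)$-automorphism $\phi$ of $Y_\eta$ induces a $\mathbb{C}$-automorphism of $\mathbb{C}(Y) = \mathbb{C}(Y_\eta)$ fixing $\mathbb{C}(\mathbb{P}^1)$ pointwise, hence a birational map $\Phi : Y \dashrightarrow Y$ with $f \circ \Phi = f$. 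The assignment $\phi \mapsto \Phi$ respects composition and is injective (if $\Phi$ is the identity then so is its restriction to the generic fiber), so it suffices to prove that each $\Phi$ is a pseudoautomorphism, i.e.\ that neither $\Phi$ nor $\Phi^{-1}$ contracts a divisor.

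Next I would localize the problem. Because $\phi$ is an isomorphism of generic fibers, it spreads out to an isomorphism $f^{-1}(U) \to f^{-1}(U)$ over a dense open $U \subset \mathbb{P}^1$; therefore the indeterminacy and exceptional loci of $\Phi$ lie in the finitely many fibers over $\mathbb{P}^1 \setminus U$, and any divisor contracted by $\Phi$ must be vertical (contained in a single fiber). Ruling out the contraction of such vertical divisors is the crux, and is exactly where the hypothesis $-K_Y = f^*\mathcal{O}(1)$ enters.

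The main step is the following crepancy argument. I would choose a common resolution $p, q : W \to Y$ with $W$ smooth and $q = \Phi \circ p$. Since $f \circ \Phi = f$ we have $f \circ p = f \circ q$, and hence, using $K_Y = f^*\mathcal{O}(-1)$, it follows that $p^*K_Y = (f\circ p)^*\mathcal{O}(-1) = (f \circ q)^*\mathcal{O}(-1) = q^*K_Y$. Thus the discrepancy divisor $K_W - p^*K_Y$ equals $K_W - q^*K_Y$. Now suppose $D \subset Y$ is a prime divisor contracted by $\Phi$; its strict transform $\tilde D \subset W$ is then $q$-exceptional but not $p$-exceptional. Reading off the coefficient of $\tilde D$ in the single divisor $K_W - p^*K_Y = K_W - q^*K_Y$ from the two descriptions yields a contradiction: from the $p$-side it is $0$, since $K_W - p^*K_Y$ is supported on the $p$-exceptional locus, which does not contain $\tilde D$; while from the $q$-side it is the discrepancy $a(\tilde D ; Y) > 0$, positive because $Y$ is smooth and hence terminal. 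Therefore $\Phi$ contracts no divisor. Applying the same argument to $\phi^{-1}$, whose spread-out is $\Phi^{-1}$, shows $\Phi^{-1}$ contracts no divisor either, so $\Phi$ is an isomorphism in codimension one and lies in $\PsAut(Y)$.

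I expect the only genuine subtlety to be the vertical exceptional divisors isolated in the second step: a priori $\Phi$ could contract a component of a degenerate fiber, and the spreading-out argument by itself does not exclude this. What makes the proof go through is precisely that the fibration is crepant, i.e.\ $K_Y$ is pulled back from $\mathbb{P}^1$, together with the smoothness (hence terminality) of $Y$; the discrepancy comparison then forces any contracted divisor to have discrepancy simultaneously zero and strictly positive, which is impossible.
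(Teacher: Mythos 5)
Your strategy is sound, and it is essentially an inlined, self-contained version of what the paper does by citation: the paper's proof observes that $f: Y \rightarrow \mathbb{P}^{1}$ is a relative minimal model (since $Y$ is smooth and $K_{Y}$ is trivial on fibers) and then invokes the standard fact that a birational map between relative minimal models over a base is an isomorphism in codimension one (this is Theorem 3.52(2) of \cite{KM98}, which the paper cites with a slip as \cite{K97}). Your explicit spreading-out step, which the paper leaves implicit, is correct, and your observation that $K_{Y} = f^{\ast}\mathcal{O}(-1)$ gives the crepancy relation between the two sides of the common resolution ``for free'' is a genuine simplification over the general minimal-model argument, where that relation must itself be extracted from relative nefness via the negativity lemma.

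There is, however, a gap at your decisive step, ``reading off the coefficient of $\tilde D$ in the single divisor $K_{W} - p^{\ast}K_{Y} = K_{W} - q^{\ast}K_{Y}$.'' The equality $p^{\ast}K_{Y} = q^{\ast}K_{Y}$ you derive is an equality of divisor \emph{classes}, and a class has no well-defined coefficient along $\tilde D$. To compare coefficients you need both ramification formulas $K_{W} = p^{\ast}K_{Y} + A$ (with $A$ effective, supported on the $p$-exceptional locus, with discrepancy coefficients) and $K_{W} = q^{\ast}K_{Y} + B$ (likewise for $q$) to hold as equalities of \emph{actual} divisors with the \emph{same} representative of $K_{W}$; but each formula fixes its own representative, namely $\divisor(p^{\ast}\omega)$ resp.\ $\divisor(q^{\ast}\omega)$ for a rational $3$-form $\omega$ on $Y$, and these differ by the principal divisor of $g = q^{\ast}\omega / p^{\ast}\omega$. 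A priori you only know $A \sim B$, and linear equivalence alone never licenses coefficient comparison: two distinct fibers of $f \circ p$ are linearly equivalent effective divisors with disjoint supports. The missing step is short but genuinely needed, and it is exactly where projectivity enters. Either apply the negativity lemma (\cite{KM98}, Lemma 3.39) twice: $A - B$ is numerically trivial, hence nef over $Y$ via both $p$ and $q$, and since $p_{\ast}(B - A) = p_{\ast}B \geq 0$ and $q_{\ast}(A - B) = q_{\ast}A \geq 0$, both $B - A$ and $A - B$ are effective, so $A = B$. Or argue directly: $B - A = \divisor(g)$, so pushing forward by $p$ gives $\divisor_{Y}(g) = p_{\ast}B \geq 0$; a rational function with effective divisor on the projective normal variety $Y$ is constant, hence $A = B$. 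With $A = B$ in hand, your coefficient comparison (zero on the $p$-side versus $a(\tilde D; Y) > 0$ on the $q$-side by terminality of the smooth $Y$) is valid, and the rest of your proof goes through.
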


\begin{proof}
(Proof of Proposition \ref{prop:AutSubsetPsAut}). This follows from the following two facts:
\begin{enumerate}
\item $Y \rightarrow \mathbb{P}^{1}$ is a relative minimal model, since $Y$ is smooth and $K_{Y}$ is nef (in fact, trivial) on the fibers of $f$.
\item Let $Y \rightarrow S$ and $Z \rightarrow S$ be relative minimal models and $\alpha: Y \dashrightarrow Z$ a birational map over $S$. Then $\alpha$ is an isomorphism in codimension 1 (\cite{K97}, Theorem 3.52 (2)).
\end{enumerate}
\end{proof}

Another result of Sterk that we need is a consequence of the Morrison cone conjecture for {\it K}3 surfaces.

\begin{theorem}
(Sterk, \cite{S85}, Proposition 2.6). \label{thm:SterkLineBundle}
Let $F$ be a {\it K}3 surface. Consider line bundles $L$ such that $L$ is nef and $L^{2} = 2k$ for some fixed $k \in \mathbb{N}$. Then $\Aut(F)$ acts on the set of all such $L$ with finitely many orbits.
\end{theorem}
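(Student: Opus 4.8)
The plan is to move the whole problem onto the Picard lattice and reduce it to a finiteness statement about lattice vectors of fixed norm lying in a rational polyhedral cone, the latter being furnished by the cone conjecture for $F$ (Theorem \ref{thm:MCCK3overChar0}).

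First I would identify each line bundle with its class in $\Pic(F)$, an even lattice of signature $(1,\rho-1)$ for the intersection pairing, on which $\Aut(F)$ acts by isometries preserving the nef cone. For a nef class $L$ with $L^2 = 2k$ and $k \geq 1$, Riemann--Roch gives $\chi(F,L) = 2 + \tfrac12 L^2 = 2+k$; since $L$ is nef with $L^2 > 0$ it is big, so $-L$ is not effective and $H^2(F,L) \cong H^0(F,-L)^\vee = 0$, whence $h^0(F,L) \geq \chi(F,L) > 0$ and $L$ is effective. Thus $L \in \Nefe(F)$. By Theorem \ref{thm:MCCK3overChar0}, $\Aut(F)$ acts on $\Nefe(F)$ with a rational polyhedral fundamental domain $\Pi$, and every $\Aut(F)$-orbit in $\Nefe(F)$ meets $\Pi$. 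Hence it suffices to prove that $\Pi$ contains only finitely many lattice vectors $L$ with $L^2 = 2k$: this bounds the number of orbits.

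The core is therefore the following lattice lemma, which I would prove directly: a rational polyhedral cone $\Pi$ contained in the closure $\overline{\mathcal{C}}$ of the positive cone meets each level set $\{L^2 = 2k\}$, $k \geq 1$, in finitely many lattice points. After triangulating $\Pi$ into finitely many simplicial subcones I may assume $\Pi = \sum_i \mathbb{R}_{\geq 0} v_i$ with linearly independent primitive generators $v_i \in \overline{\mathcal{C}}$, so that $v_i^2 \geq 0$ and $v_i \cdot v_j \geq 0$ for all $i,j$ by the reverse Cauchy--Schwarz inequality (valid for vectors in $\overline{\mathcal{C}}$). Given lattice points $L_n = \sum_i t_{n,i} v_i \in \Pi$ with $L_n^2 = 2k$, the expansion $L_n^2 = \sum_{i,j} t_{n,i} t_{n,j}(v_i\cdot v_j)$ has all nonnegative terms; comparing with $2k$ bounds $t_{n,i}$ for every generator with $v_i^2 > 0$ and bounds each product $t_{n,i} t_{n,j}$ whenever $v_i \cdot v_j > 0$. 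Consequently, if $|L_n| \to \infty$, then along a subsequence exactly one generator, say $e := v_1$, is isotropic ($e^2 = 0$) with $t_{n,1} \to \infty$, while the remaining part $w_n := \sum_{i \geq 2} t_{n,i} v_i$ stays bounded.

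The main difficulty is precisely this escape into a cusp of $\Pi$, and it is resolved by integrality. Writing $L_n = t_{n,1} e + w_n$ and using $e^2 = 0$, the relation $L_n^2 = 2k$ becomes $2 t_{n,1}(e\cdot w_n) + w_n^2 = 2k$, so $e \cdot w_n = (2k - w_n^2)/(2 t_{n,1}) \to 0$ because $w_n$ is bounded and $t_{n,1} \to \infty$. On the other hand $L_n \cdot e = t_{n,1}(e\cdot e) + e\cdot w_n = e\cdot w_n$, while $L_n \cdot e$ is a positive integer: it lies in $\mathbb{Z}$ since $L_n, e \in \Pic(F)$, and reverse Cauchy--Schwarz gives $L_n \cdot e > 0$ as $L_n^2 = 2k \neq 0$ makes $L_n$ non-proportional to the isotropic class $e$. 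Thus $L_n \cdot e \geq 1$ cannot tend to $0$, a contradiction, so no such sequence exists and the lemma follows. I would finally remark that the hypothesis $k \geq 1$ is essential: for $k = 0$ the classes $e, 2e, 3e, \dots$ of an elliptic fibration are nef, isotropic, and pairwise $\Aut(F)$-inequivalent, so finiteness of orbits genuinely fails in that case.
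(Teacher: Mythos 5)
Your proof is correct, but note that the paper itself gives no argument for this statement: it is imported verbatim from Sterk \cite{S85}, with only the remark that it is ``a consequence of the Morrison cone conjecture for {\it K}3 surfaces.'' What you have done is carry out that derivation explicitly, and it is essentially Sterk's own route: Riemann--Roch plus vanishing shows every nef $L$ with $L^2 = 2k > 0$ is effective, hence lies in $\Nefe(F)$, so by Theorem \ref{thm:MCCK3overChar0} every orbit meets the rational polyhedral fundamental domain $\Pi$, and the theorem reduces to your lattice lemma that $\Pi$ contains finitely many integral classes of fixed positive square. The lemma itself is sound: the simplicial reduction, the nonnegativity of all pairings $v_i \cdot v_j$ by reverse Cauchy--Schwarz, the isolation of a single isotropic generator $e$ along an escaping sequence (two independent isotropic generators cannot pair to zero in a hyperbolic lattice), and the final contradiction between $L_n \cdot e = e \cdot w_n \to 0$ and $L_n \cdot e \in \mathbb{Z}_{\geq 1}$ are all valid --- the strict positivity $L_n \cdot e > 0$ holds because $L_n$ is timelike and $e \neq 0$ lies in the closed positive cone. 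So the approach is the expected one, executed in full rather than cited; what your version buys is self-containedness and, importantly, the observation that $k \geq 1$ is not a cosmetic hypothesis: for $k = 0$ the multiples $ne$ of an elliptic pencil class are nef, isotropic, and pairwise inequivalent under $\Aut(F)$, since automorphisms preserve divisibility. That caveat is genuinely relevant to the paper: in the proof of part (1) of Theorem \ref{thm:mainTheorem}, the case $\bar{S} = \mathbb{F}_0$ produces $L = \theta^{\ast}M$ with $M$ a section of square zero, i.e.\ $k = 0$, where the theorem as stated (and as provable) does not apply; one should instead take $M = C_0 + f$ on $\mathbb{F}_0$ (so $M^2 = 2$ and $L^2 = 4$), or restrict to primitive isotropic classes, for which finiteness of orbits does hold.
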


Now we prove the first claim of Theorem \ref{thm:mainTheorem}. \\

\begin{proof} (Proof of (1) of Theorem \ref{thm:mainTheorem}). Suppose that the contraction is a conic bundle $g: Z \rightarrow S$, where $Z$ is a 3-fold. We note that $Z$ may not be the 3-fold $Y$ that we start with, but $Y$ and $Z$ are related by flops $Y \dashrightarrow Z$. We continue to denote the {\it K}3 fibration by $f: Z \rightarrow \mathbb{P}^{1}$.

\begin{center}
\begin{tikzcd}
Y \arrow[r, dotted, "flops"] \arrow[d, phantom]
& Z \arrow[d, "f"] \arrow[r, "g"] & S \\
 \arrow[r, phantom]
& \mathbb{P}^{1}
\end{tikzcd}
\end{center}

Let $F = Z_{\eta}$ be the generic fiber of the {\it K}3 fibration $f$. Denote the extension of scalars $S \times \eta = S \otimes_{\mathbb{C}} \mathbb{C}(\mathbb{P}^{1})$ again by $S$. Define $h:= g \vert_{F}: F \rightarrow S$, a generically finite morphism of degree 2. We prove that $h$ is finite (meaning that no curves $\Gamma \subset F$ are contracted by $h$) by contradiction. Suppose the claim is false, that is, suppose that $h(\Gamma)$ is a point for some $\Gamma \subset F$. Then since $h:=g \vert_{F}$, we know that $g(\Gamma)$ is a point. Because $g$ is the contraction of an extremal ray in the region $K_{Z} < 0$, we have $K_{Z} \cdot \Gamma < 0$. But $K_{Z} \cdot \Gamma = f^{\ast} \mathcal{O}(1) \cdot \Gamma = 0$. Therefore $h$ is finite of degree two.

There is an involution $i$ acting on $F$, associated to $h: F \rightarrow S$. Because $h$ is a degree two cover, it  is given by a quotient by a $\mathbb{Z}/2\mathbb{Z}$-action. Let $i: F \rightarrow F$ be the involution corresponding to $h$.

Since the conic bundle $g$ maps the smooth 3-fold $Y$ to a base $S$, it follows (\cite{M82}) that $S$ is also smooth. By the classification of involutions of {\it K}3 surfaces, there are two possibilities for the fixed locus $\Fix(i)$ of the involution $i$: either $\Fix(i)$ is empty, or $\Fix(i)$ is the union of smooth curves. (Note that there also exist examples of involutions of {\it K}3 surfaces where $\Fix(i)$ has isolated fixed points. But then $S$ would be singular, and therefore this is not a possibility in our setting.) In the first case, the base $S$ must be an Enriques surface, and in the second case $S$ is a rational surface.

If $S$ is an Enriques surface, then we can explicitly describe the movable cone of $Y$ and show that the cone conjecture holds in this case (see Example \ref{ex:Enriques}).

This leaves the second case, where $S$ is a rational surface. First, Riemann-Hurwitz gives $K_{F} = h^{\ast}(K_{S} + \displaystyle{\frac{1}{2} B})$. Since $F$ is {\it K}3, it follows that $K_{F} \equiv 0$, so $K_{S} + \displaystyle{\frac{1}{2}}B \equiv 0$, and thus $B \in \vert -2K_{S} \vert$. Next, run MMP on $S$ to obtain $\pi: S \rightarrow \bar{S}$, where $\bar{S} = \mathbb{P}^{2}$ or $\mathbb{F}_{n}$ with $n \neq 1$. We claim that $n \leq 4$. The idea is that when we have a {\it K}3 cover, the complete linear system $\vert -2K_{\bar{S}} \vert$ does not have a multiple fixed component, and thus $n \leq 4$. 

We show this in more detail here. Suppose that $n > 4$. The diagram below shows the relevant maps.
\begin{center}
\begin{tikzcd}
F \arrow[r] \arrow[d, "h"]
& \bar{F} \arrow[d, "\bar{h}"] \\
S \arrow[r, "MMP"] \arrow[d, hookleftarrow] 
& \bar{S} \arrow[d, hookleftarrow] \\
B \arrow[r]
& \bar{B}
\end{tikzcd}
\end{center}
We write $\bar{B} = Im(B) \subset \bar{S}$. On $\bar{S} = \mathbb{F}_{n}$, let $f$ denote a fiber and $C_{1}$ the positive section and $C_{0}$ the negative section. Observe that $C_{1} = C_{0} + nf$. We show that $\vert -2K_{\bar{S}} \vert$ contains $2C_{0}$. Since $-K_{\mathbb{F}_{n}} = D$, the toric boundary, we can write
\begin{align*}
-K_{\mathbb{F}_{n}} &= C_{0} + f + C_{1} + f \\
&= 2C_{0} + (n+2)f,
\end{align*}
using the observation above. Then,
\begin{center}
$-K_{\mathbb{F}_{n}} \cdot C_{0} = -(n - 2) < 0 \text{ for } n>2$.
\end{center}
Thus $C_{0}$ is fixed in $\vert -2K_{\mathbb{F}_{n}} \vert$ for $n > 4$ and $C_{0} \subset \bar{B}$. On the other hand,
\begin{align*}
(\vert -2K_{\mathbb{F}_{n}} - C_{0} \vert) \cdot C_{0} &= -2(K_{\mathbb{F}_{n}} \cdot C_{0}) - C_{0}^{2} \\
&= -2(n-2) + n \\
&= -2n + 4 + n \\
&-(n - 4) \\
&< 0 \text{ for } n>4.
\end{align*}
Then for $n > 4$, the curve $C_{0}$ is fixed in $\vert -2K_{\mathbb{F}_{n}} - C_{0} \vert$. Therefore $2C_{0} \subset \bar{B}$. But this is a contradiction because we see that the double cover is singular along the curve $\bar{h}^{-1}(C_{0})$, but the cover should be smooth there. Therefore $n \leq 4$.

Next, we construct a birational morphism $\phi: \bar{S} \rightarrow S$ by taking $\phi$ to be the identity if $\bar{S} = \mathbb{P}^{2}$, or in the latter case (that is if $\bar{S} = \mathbb{F}_{n}$ with $0 \leq n \leq 4$ and $n \neq 1$), by letting $\phi: \mathbb{F}_{n} \rightarrow \mathbb{P}(1, 1, n)$ be the contraction of the negative section of $\mathbb{F}_{n}$. Now we have the sequence of maps
\begin{center}
$F \xrightarrow{h} S \xrightarrow{\pi} \bar{S} \xrightarrow{\phi} \hat{S}$.
\end{center}
Let $\theta = (\pi \circ h): F \rightarrow \bar{S}$. If $\bar{S} = \mathbb{P}^{2}$, take $L = \theta^{\ast}M$, where $M = H$ is a hyperplane class. Then $M^{2} = 1$ and $L^{2} = 2$. Suppose that $\bar{S} = \mathbb{F}_{n}$ (with $0 \leq n \leq 4$ and $n \neq 1$). Take $L = \theta^{\ast}M$, where $M$ is the positive section of $\mathbb{F}_{n}$. Then $M^{2} = n$ and therefore $L^{2} = 2n \leq 8$ (because $n \leq 4$). This proves that in any case (if $S = \mathbb{P}^{2}$ or $\mathbb{F}_{n}$, where $n \leq 4$ and $n \neq 1$), there is a line bundle $L$ where $L^{2}$ is bounded, and thus we may apply Theorem \ref{thm:SterkLineBundle} to conclude that $\Aut(F)$ acts with finitely many orbits on such $L$.

Now we have $\theta: F \rightarrow \bar{S}$ where $\theta = \pi \circ h$ and $L = \theta^{\ast} M$. We want to show that we can recover the 2-to-1 map $F \rightarrow S$ from $L$. It suffices to show that $L$ defines the morphism $F \rightarrow \hat{S}$. First of all, there is always an injection
\begin{center}
$H^{0}(F, L) \hookleftarrow H^{0}(\hat{S}, M)$,
\end{center}
given by $\theta^{\ast}$. We need to show that this map is also surjective. To do this, it suffices to show that the dimensions of the left and right sides are equal, that is,
\begin{center}
$\dim H^{0}(F, L) = \dim(H^{0}(\hat{S}, M))$.
\end{center}
Recall that $L$ and $M$ are nef and big. Moreover, recall that $F$ is smooth and $\bar{S}$ is log terminal. First of all,
\begin{align*}
h^{0}(L) &= \chi(F, L) \text{ by Kawamata Viehweg vanishing} \\
&= \chi(\mathcal{O}) + \displaystyle{\frac{1}{2}} L \cdot (L - K_{F}) \text{ by Riemann-Roch} \\
&= 2 + \displaystyle{\frac{1}{2}} L^{2} \\
&= 2 + \displaystyle{\frac{1}{2}} (2 \cdot M^{2}) \\
&= 2 + M^{2} \\
&= 2 + n,
\end{align*}
where $\bar{S} = \mathbb{P}(1, 1, n)$ for $n \in \mathbb{N}$. Then on $\bar{S}$, we can compute directly that $h^{0}(M) = n + 2$. Therefore $\dim H^{0}(\bar{D}, L) = \dim(H^{0}(\bar{S}, M))$.
\end{proof}

What remains is to prove Theorem \ref{thm:mainTheorem} for the extremal rays on $-K_{Z}^{\bot}$. We use the following theorem of Kawamata:

\begin{theorem} (Kawamata, \cite{K97}).
\label{thm:KawamataRelative}
Let $Y$ be a 3-fold and $f: Y \rightarrow S$ a {\it K}3 fibration. Then $\PsAut(Y/S)$ acts on $\Mov^{e}(Y/S)$ with finitely many orbits on faces.
\end{theorem}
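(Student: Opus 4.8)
The plan is to reduce the relative statement over $S$ to the absolute cone conjecture on the generic fiber, which the paper has already recorded as Theorem \ref{thm:MCCK3overChar0}. Write $k = \mathbb{C}(S)$ and let $Y_{\eta}$ denote the generic fiber of $f$, a K3 surface over $k$. The first step is to pass from the relative geometry of $f : Y \to S$ to the absolute geometry of $Y_{\eta}$ by restriction of divisors. This should identify $N^{1}(Y/S)_{\mathbb{R}}$ with $N^{1}(Y_{\eta})_{\mathbb{R}} = \Pic(Y_{\eta})_{\mathbb{R}}$, and under this identification I expect $\Mov^{e}(Y/S)$ to correspond to $\Mov^{e}(Y_{\eta})$. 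Since $Y_{\eta}$ is a smooth surface it admits no small contractions, so $\Mov^{e}(Y_{\eta}) = \Nef^{e}(Y_{\eta})$; the relative SQMs of $Y$ over $S$ restrict to isomorphisms in codimension one on $Y_{\eta}$ (by \cite{K97}, Theorem 3.52 (2), exactly as used in Proposition \ref{prop:AutSubsetPsAut}), which is what makes the relative movable chambers visible as the nef cone of $Y_{\eta}$ together with its translates.

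The second step is to match the group actions. By the argument of Proposition \ref{prop:AutSubsetPsAut}, a relative pseudoautomorphism of $Y$ over $S$ restricts to a birational self-map of $Y_{\eta}$ that is an isomorphism in codimension one, hence an automorphism of the K3 surface $Y_{\eta}$; conversely every automorphism of $Y_{\eta}$ extends to a relative pseudoautomorphism, so $\PsAut(Y/S) = \Aut(Y_{\eta})$. Under the identification of cones above, the action of $\PsAut(Y/S)$ on $\Mov^{e}(Y/S)$ then corresponds to the action of $\Aut(Y_{\eta})$ on $\Nef^{e}(Y_{\eta})$. With this in hand, the third step is to invoke Theorem \ref{thm:MCCK3overChar0}: since $k$ has characteristic zero, $\Aut(Y_{\eta})$ acts on $\Nef^{e}(Y_{\eta})$ with a rational polyhedral fundamental domain. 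Such a domain has only finitely many faces, and every face of $\Nef^{e}(Y_{\eta})$ is carried by some group element onto a face meeting the fundamental domain; hence there are finitely many orbits of faces, and transporting this back through the identification yields finitely many orbits of $\PsAut(Y/S)$ on the faces of $\Mov^{e}(Y/S)$.

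The main obstacle I anticipate is the first step: making the identification of the relative cone $\Mov^{e}(Y/S)$ with the absolute cone $\Nef^{e}(Y_{\eta})$ fully rigorous and simultaneously compatible with all three structures at play, namely the linear structure on $N^{1}$, the effective/movable/nef cone structure, and the face lattice, so that orbits of faces upstairs and downstairs correspond. One must verify that restriction to $Y_{\eta}$ is well defined and bijective on the relevant N\'eron--Severi spaces, that it preserves both effectivity and movability, and that the standard passage from a rational polyhedral fundamental domain to finiteness of orbits on faces remains valid in this setting over the non-closed field $k$. Once these bookkeeping points are settled, the conclusion follows immediately from the K3 cone conjecture of Sterk and Kawamata.
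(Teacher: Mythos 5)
Your reduction to the generic fiber captures only part of Kawamata's theorem, and the part it misses is the crux. The identification in your first step fails in general: the restriction map $N^{1}(Y/S) \rightarrow \Pic(Y_{\eta}) \otimes \mathbb{R}$ is surjective but not injective whenever $f$ has a reducible fiber, because the individual components of such a fiber are nonzero classes in $N^{1}(Y/S)$ (only the full fiber class, being a pullback from $S$, dies in the quotient), while every such component restricts to zero on $Y_{\eta}$. Consequently $\Move(Y/S)$ lives in a strictly larger vector space than $\Nefe(Y_{\eta})$, and its face structure does not transfer: a divisorial contraction of a fiber component, or a flop of a curve contained in a closed fiber, produces faces and chamber walls of $\Move(Y/S)$ that are invisible on the generic fiber, since the relevant classes restrict to $0$ and distinct faces upstairs collapse onto the same face downstairs (or into its interior). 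This is not a borderline situation: in the paper's examples the fiber over $\infty$ is a reducible normal crossing (type III) surface, and Theorem \ref{thm:KawamataRelative} is invoked in the paper precisely to handle the faces coming from extremal rays on $-K_{Z}^{\bot}$, i.e.\ from vertical curves --- exactly the part of the geometry your identification discards.

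What your argument does establish, correctly, is the ``horizontal'' statement: $\PsAut(Y/S) \cong \Aut(Y_{\eta})$ (this step is fine, granted $Y \rightarrow S$ is a relative minimal model, and is the same argument as Proposition \ref{prop:AutSubsetPsAut}), and $\Aut(Y_{\eta})$ acts with finitely many orbits on the faces of $\Nefe(Y_{\eta})$ by Theorem \ref{thm:MCCK3overChar0}. But that combination is essentially what the paper already uses in its proof of Theorem \ref{thm:mainTheorem}(2); it is not Theorem \ref{thm:KawamataRelative}. Note also that the paper gives no proof of this theorem --- it is quoted as a black box from \cite{K97} --- and Kawamata's actual proof must, and does, treat the vertical divisor classes and the flops supported in the finitely many degenerate fibers in addition to the generic-fiber finiteness you invoke. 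To repair your proposal you would need a second finiteness argument for this vertical structure (finitely many degenerate fibers, each contributing finitely many components, together with compatibility of the chamber decomposition in the vertical directions with the group action); supplying that argument is the substantial content of Kawamata's paper, not a bookkeeping point.
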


Kawamata's cone lives in $N^{1}(Y/S) = N^{1}(Y) \times \mathbb{R} / f^{\ast} N^{1}(S) \otimes \mathbb{R}$, which for us is 
\begin{center}
$N^{1}(Y) \otimes \mathbb{R} / \mathbb{R} \cdot [-K_{Y}]$
\end{center}
(this is because in our setting, $S = \mathbb{P}^{1}$ so that $N^{1}(S) = \mathbb{Z} \cdot [-K_{Y}]$). We know that $\PsAut(Y/\mathbb{P}^{1})$ acts on faces of the chamber decomposition containing $[-K_{Y}]$ with finitely many orbits. In our picture, every chamber $\Nef(Z)$ contains $[-K_{Y}]$ because every model has this fibration (the models are related by flops). Then $\PsAut(Y)$ acts on the codimension 1 faces of $Mov(Y)$ with finitely many orbits, taking care of the extremal rays on $-K_{Z}^{\bot}$ and completing the proof of Theorem \ref{thm:mainTheorem}.

Theorem \ref{thm:PsAutActsFiniteOrbitsIfH3zero} follows from Proposition \ref{prop:eliminate6cases}, Theorem \ref{thm:mainTheorem}, and Theorem \ref{thm:KawamataRelative}.

%
%

\section{Examples}
\label{sec:examples}

Let $S$ be a {\it K}3 surface over a field $K$ of characteristic zero. Assume that $K$ is the function field of a complex curve. Then the Brauer group of $K$ is trivial (Tsen's theorem). Then, writing $G = \Gal(\bar{K}/K)$ for the absolute Galois group of $K$, $\Pic(S) = \Pic(S \otimes_{K} \bar{K})^{G}$, see \cite{H16}, Chapter 17 \S 2.2. Assume in addition that $G$ acts trivially on $\Pic(S \otimes_{K} \bar{K})$, so that $\Pic(S) = \Pic(S \otimes_{K} \bar{K})$. Let $W \subset \Aut(\Pic(S))$ be the Weyl group generated by the reflections

\begin{center}
$s_{\alpha}(x) = x + (x.\alpha) \alpha$
\end{center}
for $\alpha \in \Pic(S)$ such that $\alpha^{2} = -2$. Then $\Aut(S)$ is commensurable with $\Aut(\Pic(S))/W$. This follows from the Torelli theorem over $K$ (Theorem \ref{thm:SterkKawamata}), cf. \cite{H16}, Chapter 15 Corollary 2.7. In particular:

\begin{proposition}
\label{prop:AutSfinite2reflexive}
With notation as above, $\Aut(S)$ is finite if and only if $W \subset \Aut(\Pic(S))$ has finite index. Then the lattice $\Pic(S)$ is called 2-reflective in \cite{D83}. \qed
\end{proposition}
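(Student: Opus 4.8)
The plan is to deduce the statement directly from the commensurability $\Aut(S) \sim \Aut(\Pic(S))/W$ recorded just above the proposition, together with two elementary observations: that $W$ is normal in $\Aut(\Pic(S))$, and that finiteness is a commensurability invariant. All of the geometric content sits in the Torelli-based commensurability statement, which is cited, so what remains is purely formal.

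First I would check that $\Aut(\Pic(S))/W$ is genuinely a group, i.e.\ that $W \trianglelefteq \Aut(\Pic(S))$. For any isometry $g \in \Aut(\Pic(S))$ and any class $\alpha$ with $\alpha^{2} = -2$, a direct computation gives $g\, s_{\alpha}\, g^{-1} = s_{g\alpha}$; since $g$ preserves the intersection form, $(g\alpha)^{2} = \alpha^{2} = -2$, so $s_{g\alpha}$ is again one of the generators of $W$. Hence conjugation by $g$ permutes the generating reflections and $gWg^{-1} = W$, establishing normality.

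Next I would invoke the commensurability. Recall that if two groups $A$ and $B$ are commensurable --- i.e.\ they contain isomorphic subgroups of finite index --- then $A$ is finite if and only if $B$ is finite: a finite-index subgroup of a finite group is finite, an isomorphic copy of a finite group is finite, and a group containing a finite subgroup of finite index is itself finite. Applying this to $A = \Aut(S)$ and $B = \Aut(\Pic(S))/W$ shows that $\Aut(S)$ is finite if and only if $\Aut(\Pic(S))/W$ is finite. Finally, since $\lvert \Aut(\Pic(S))/W \rvert = [\Aut(\Pic(S)) : W]$, the quotient is finite precisely when $W$ has finite index in $\Aut(\Pic(S))$, and combining the last two steps yields the claimed equivalence.

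I do not expect a genuine obstacle here, as the substantive input is the cited commensurability. The only point requiring a moment's care is the normality of $W$, which is needed both so that $\Aut(\Pic(S))/W$ is a group and so that its cardinality equals the index $[\Aut(\Pic(S)) : W]$; this is exactly why the proposition can be stated with a \qed and no separate argument.
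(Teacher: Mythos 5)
Your proposal is correct and matches the paper's (implicit) argument: the proposition is stated with a \qed precisely because it follows formally from the commensurability $\Aut(S) \sim \Aut(\Pic(S))/W$ recorded just above it, exactly as you deduce. Your added verifications --- normality of $W$ via $g s_{\alpha} g^{-1} = s_{g\alpha}$ and the fact that finiteness is a commensurability invariant --- are the routine details the paper leaves unstated, so there is no divergence in approach.
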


\begin{remark}
If $G$ does not act trivially on $\Pic(S \otimes_{K} \bar{K})$, then one can define a reflection group $W^{\prime}$ such that $\Aut(S)$ is commensurable with $\Aut(\Pic(S))/W^{\prime}$ (cf. \cite{OS01}, Definition 1.6), but $W^{\prime} \nsubseteq W$ in general.
\end{remark}

Many examples are obtained via mirror symmetry for Fano 3-folds. Let $X$ be a Fano 3-fold and $E \subset X$ a smooth divisor such that $K_{X} + E = 0$. Then there exists an associated smooth projective 3-fold $Y$ and a {\it K}3-fibration $f: Y \rightarrow \mathbb{P}^{1}$ such that $-K_{Y} = f^{\ast} \mathcal{O}(1)$ and $H^{3}(Y, \mathbb{Z}) = 0$ and there is a fiber $D$ of $f$ which is a normal crossing divisor with a 0-dimensional stratum (\cite{CCGK16}, \cite{CP18}, \cite{P18} \cite{DHKOP23}). In particular, $f: Y \rightarrow \mathbb{P}^{1}$ satisfies the hypotheses of Theorem \ref{thm:PsAutActsFiniteOrbitsIfH3zero}.

\begin{remark}
The Fano 3-fold $X$ is mirror to the {\it K}3-fibration $f: Y \setminus D \rightarrow \mathbb{A}^{1}$ (a so-called Landau-Ginzburg model).
\end{remark}

\noindent Moreover, the Picard lattice $\Pic(Y_{\bar{\eta}})$ of the geometric generic fiber $Y_{\bar{\eta}} = Y_{\eta} \otimes_{\mathbb{C}(\mathbb{P}^{1})} \overline{\mathbb{C}(\mathbb{P}^{1})}$ of $f: Y \rightarrow \mathbb{P}^{1}$ and the action of the Galois group on it were computed (\cite{P18}, \cite{DHKOP23}). There are 105 deformation types of Fano 3-folds. We will use the notation of Mori-Mukai (as in \cite{CCGK16}) for the Fano 3-folds of Picard rank at least 2. The Galois group action on $\Pic(Y_{\bar{\eta}})$ is trivial except in cases 2.12, 4.3, 6.1, 7.1, 8.1, 9.1, and 10.1. In the remaining cases, the lattice $\Pic(Y_{\eta}) = \Pic(Y_{\bar{\eta}})$ is not 2-reflective except in cases 2.1, 3.2, 3.6, 4.9, and the Picard rank 1 cases $X_{6} \subset \mathbb{P}(1, 1, 1, 1, 3)$ and $X_{6} \subset \mathbb{P}(1, 1, 1, 2, 3)$. Here we use Nikulin's classification of 2-reflective lattices (cf. \cite{D83}, Theorem 2.2.2). In the remaining $105 - 13 = 92$ cases, $\Aut(Y_{\eta})$ is infinite by Proposition \ref{prop:AutSfinite2reflexive}, so $\PsAut(Y)$ is infinite (and so $\Mov(Y)$ is not rational polyhedral).

\begin{remark}
The {\it K}3 fibration $f: (Y, D) \rightarrow (\mathbb{P}^{1}, \infty)$ mirror to a Fano 3-fold $X$ deforms in a family of dimension $\rho(X) - 1$ with the same properties (\cite{DHKOP23}, Remark 1.8).
\end{remark}

\begin{example} (Mirror of quartic 3-fold $X_{4} \subset \mathbb{P}^{4}$.)
\label{ex:mirrorQuartic3fold}
Consider a pencil of quartic {\it K}3 surfaces in $\mathbb{P}^{3}$:
\begin{center}
$((X + Y + Z + T)^{4} + t(XYZT) = 0) \subset \mathbb{P}^{3}$, where $t \in \mathbb{P}^{1} = \mathbb{C} \cup \{ \infty \}$.
\end{center}
Let $Y \rightarrow \mathbb{P}^{3}$ be a crepant resolution $Y \rightarrow \hat{Y}$ of the blowup $\hat{Y} \rightarrow \mathbb{P}^{3}$ of the base locus of the pencil. Then $f$ is a {\it K}3 fibration, $-K_{Y} = f^{\ast}\mathcal{O}(1)$, $D = f^{\ast} \infty$ is a type III {\it K}3, and
\begin{center}
$\Pic(Y_{\eta}) \cong \langle -4 \rangle \oplus U \oplus (-E_{8})^{\oplus 2}$
\end{center}
is a non 2-reflective lattice with trivial Galois action. So $\PsAut(Y)$ is infinite.
\end{example}

\begin{example}
\label{ex:Enriques}
Here we describe all examples of Type (6) with base $S$ an Enriques surface. Recall that in this case, we have a conic bundle $g: Y \rightarrow S$. The finite 2-to-1 morphism $(g, f): Y \rightarrow S \times \mathbb{P}^{1}$ has branch locus $\mathcal{B} \sim \mathrm{pr}^{\ast}_{1}(-2K_{S}) + \mathrm{pr}^{\ast}_{2} \mathcal{O}(2)$ by Riemann-Hurwitz and our assumption that $-K_{Y} = f^{\ast} \mathcal{O}(1)$. Since $S$ is an Enriques surface, we have $2K_{S} \sim 0$ and $\mathcal{B} \sim \mathrm{pr}^{\ast}_{2} \mathcal{O}(2)$. So the double cover $(g, f)$ is branched over two fibers of the trivial family $S \times \mathbb{P}^{1} \rightarrow \mathbb{P}^{1}$ and restricts to the universal cover $F \rightarrow S$ of $S$ on the remaining fibers. It follows that
\begin{center}
$Y = (F \times \mathbb{P}^{1})/(\mathbb{Z}/2\mathbb{Z})$, 
\end{center}
where the $\mathbb{Z}/2\mathbb{Z}$ action is given by the involution $i \times j$, where $i$ is the fixed point free involution on $F$ with quotient $S$ and $j: \mathbb{P}^{1} \rightarrow \mathbb{P}^{1}$ with $j(t) = -t$. We consider the conic bundle $g: Y \rightarrow S = F / \langle i \rangle$ as the first projection map and the {\it K}3 fibration $f: Y \rightarrow \mathbb{P}^{1} / \langle j \rangle \cong \mathbb{P}^{1}$ as the second projection.

In general if $G$ is a finite group acting on $X$ and $Y = X / G$, then the nef cone of $Y$ may be described as 
\begin{center}
$\Nef(Y) = \Nef(X) \cap N^{1}(Y)$,
\end{center}
where $N^{1}(Y) = N^{1}(X)^{G} \subset N^{1}(X)$. Therefore, to understand $\Nef(Y)$, it is enough to understand $\Nef(X)$.

In our case, we have $X = F \times \mathbb{P}^{1}$ and $G = \mathbb{Z}/2\mathbb{Z}$. So
\begin{center}
$\Nef(X) = \langle \mathrm{pr}^{\ast}_{1} \Nef(F), \mathrm{pr}^{\ast}_{2} \Nef \mathcal{O}(1) \rangle_{\mathbb{R} \geq 0}$.
\end{center}
Now,
\begin{align*}
\Nef(Y) &= \Nef((F \times \mathbb{P}^{1}) / (\mathbb{Z}/2\mathbb{Z})) \\
&= \Nef(F \times \mathbb{P}^{1}) \cap N^{1}(F \times \mathbb{P}^{1})^{G} \\
&= \langle \mathrm{pr}^{\ast}_{1}(\Nef(F)), \mathrm{pr}^{\ast}_{2}(\mathcal{O}(1)) \rangle \cap N^{1}(F \times \mathbb{P}^{1})^{G} \\
&= \langle g^{\ast}\Nef(S), f^{\ast}\mathcal{O}(1) \rangle_{\mathbb{R} \geq 0}.
\end{align*}

In particular, there are no faces of $\Nef(Y)$ corresponding to small contractions, so $\Mov(Y) = \Nef(Y)$. Finally, $\PsAut(Y) = \Aut(Y)$ acts on $N^{1}(Y) = N^{1}(S) \times N^{1}(\mathbb{P}^{1})$ via the surjection $\Aut(Y) \twoheadrightarrow \Aut(S)$. Now the cone conjecture for Enriques surfaces (see \cite{K97}, Theorem 2.1) implies the cone conjecture for $Y$. In particular, Theorem 1.8 holds.
\end{example}

\end{document}